\DeclareMathAlphabet{\mathpzc}{OT1}{pzc}{m}{it}
\def\BZ{\mathbb{Z}}
\def\fR{\mathfrak{R}}
\def\fS{\mathfrak{S}}
\def\fT{\mathfrak{T}}
\def\fU{\mathfrak{U}}
\def\fr{\mathfrak{r}}
\def\ft{\mathfrak{t}}
\def\fx{\mathfrak{x}}
\def\fy{\mathfrak{y}}
\def\sC{\mathsf{C}}
\def\sS{\mathsf{S}}
\def\sT{\mathsf{T}}
\def\add{\operatorname{add}}
\def\adots{\mathinner{\mkern1mu\raise1.0pt\vbox{\kern7.0pt\hbox{.}}\mkern2mu\raise4.0pt\hbox{.}\mkern2mu\raise7.0pt\hbox{.}\mkern1mu}}
\def\dim{\operatorname{dim}}
\def\Hom{\operatorname{Hom}}
\def\ind{\operatorname{ind}}
\newtheorem{Lemma}{Lemma}[section]
\newtheorem{Proposition}[Lemma]{Proposition}
\theoremstyle{definition}
\newtheorem{Definition}[Lemma]{Definition}
\newtheorem{Construction}[Lemma]{Construction}
\newtheorem{Remark}[Lemma]{Remark}
\newtheorem{Notation}[Lemma]{Notation}
\begin{document}

\setlength{\parindent}{0pt}
\setlength{\parskip}{7pt}

\title[Cluster tilting vs.\ weak cluster tilting]{Cluster tilting vs.\
  weak cluster tilting in Dynkin type A infinity}

\author{Thorsten Holm}
\address{Institut f\"{u}r Algebra, Zahlentheorie und Diskrete
Mathematik, Fa\-kul\-t\"at f\"ur Mathematik und Physik, Leibniz
Universit\"{a}t Hannover, Welfengarten 1, 30167 Hannover, Germany}
\email{holm@math.uni-hannover.de}
\urladdr{http://www.iazd.uni-hannover.de/\~{ }tholm}

\author{Peter J\o rgensen}
\address{School of Mathematics and Statistics,
Newcastle University, Newcastle upon Tyne NE1 7RU, United Kingdom}
\email{peter.jorgensen@ncl.ac.uk}
\urladdr{http://www.staff.ncl.ac.uk/peter.jorgensen}


\keywords{Auslander-Reiten quiver, $d$-Calabi-Yau category,
  $d$-cluster tilting subcategory, Fomin-Zelevinsky mutation,
  functorial finiteness, left-approximating subcategory,
  right-approximating subcategory, spherical object, weakly
  $d$-cluster tilting subcategory}

\subjclass[2010]{13F60, 16G20, 16G70, 18E30}

\begin{abstract} 

  This paper shows a new phenomenon in higher cluster tilting theory.
  For each positive integer $d$, we exhibit a triangulated category
  $\sC$ with the following properties.

  On one hand, the $d$-cluster tilting subcategories of $\sC$ have
  very simple mutation behaviour: Each indecomposable object has
  exactly $d$ mutations.  On the other hand, the weakly $d$-cluster
  tilting subcategories of $\sC$ which lack functorial finiteness can
  have much more complicated mutation behaviour: For each $0 \leq \ell
  \leq d-1$, we show a weakly $d$-cluster tilting subcategory
  $\sT_{\ell}$ which has an indecomposable object with precisely
  $\ell$ mutations.

  The category $\sC$ is the algebraic triangulated category generated
  by a $( d+1 )$-spherical object and can be thought of as a higher
  cluster category of Dynkin type $A_{\infty}$.

\end{abstract}

\maketitle

\setcounter{section}{-1}
\section{Introduction}
\label{sec:introduction}

This paper shows a new phenomenon in higher cluster tilting theory.
For each integer $d \geq 1$, we exhibit a triangulated category $\sC$
whose $d$-cluster tilting subcategories have {\em very simple}
mutation behaviour, but whose weakly $d$-cluster tilting subcategories
can have {\em much more complicated} mutation behaviour which we can
control precisely.

To make sense of this, recall that if $\sT$ is a full subcategory of a
triangulated category, then $\sT$ is called {\em weakly $d$-cluster
tilting} if it satisfies the following conditions where $\Sigma$ is
the suspension functor.
\begin{align*}
  t \in \sT & \; \Leftrightarrow \;
    \Hom( \sT , \Sigma t ) = \cdots = \Hom( \sT , \Sigma^d t ) = 0, \\
  t \in \sT & \; \Leftrightarrow \;
    \Hom( t , \Sigma \sT ) = \cdots = \Hom( t , \Sigma^d \sT ) = 0.
\end{align*}
If $\sT$ is also left- and right-approximating in the ambient category
in the sense of Remark \ref{rmk:approximating}, then it is called {\em
  $d$-cluster tilting}.  These definitions are due to Iyama \cite{I}
and have given rise to an extensive homological theory, see for
instance \cite{BMR} and \cite{IY}.  Note that if $\sT = \add t$ for an
object $t$, then $\sT$ is automatically left- and right-approximating,
but we will study subcategories which are not of this form since they
have infinitely many isomorphism classes of indecomposable objects.

One remarkable property of $d$-cluster tilting theory is {\em
  mutation}.  If $t \in \sT$ is an indecomposable object, then it is
sometimes possible to remove $t$ from $\sT$ and insert an
indecomposable object $t^* \not\cong t$ in such a way that the
subcategory remains (weakly) $d$-cluster tilting.  This is called {\em
  mutation of $\sT$ at $t$}, see \cite[sec.\ 5]{IY}.

In good cases, there are exactly $d$ different choices of $t^*$ up to
isomorphism.  That is, there are $d$ ways of mutating $\sT$ at $t$,
see \cite[sec.\ 5]{IY}.

To be more precise, one hopes(!) that this happens for $d$-cluster
tilting subcategories.  Indeed, it does happen for $d = 1$ by
\cite[thm.\ 5.3]{IY}, but can fail for $d \geq 2$, see \cite[thms.\
9.3 and 10.2]{IY}.  The situation for weakly $d$-cluster tilting
subcategories is less clear.

We can now explain the opening paragraph of the paper.  Let us first
define $\sC$ which, as we will explain below, can be thought of as a
$d$-cluster category of type $A_{ \infty }$. 

\begin{Definition}
\label{def:blanket}
For the rest of the paper, $k$ is an algebraically closed field,
$d \geq 1$ is an integer, and $\sC$ is a $k$-linear algebraic
triangulated category which is idempotent complete and classically
generated by a $(d+1)$-spherical object $s$; that is,
\[
  \dim_k \sC( s , \Sigma^{\ell} s )
  = \left\{
      \begin{array}{cl}
        1 & \mbox{ for $\ell = 0, d+1$, } \\[1mm]
        0 & \mbox{ otherwise. }
      \end{array}
    \right.
\]
Note that $\sC( - , - )$ is short for the $\Hom$ functor in $\sC$.
\end{Definition}

We prove the following three theorems about $\sC$, where Theorems A
and B show {\em very simple}, respectively {\em much more complicated}
mutation behaviour.

\medskip

{\bf Theorem A. }
{\em 
Let $\sT$ be a $d$-cluster tilting subcategory of $\sC$ and let $t
\in \sT$ be indecomposable.  Then $\sT$ can be mutated at $t$ in
precisely $d$ ways.
}

{\bf Theorem B. }
{\em
Let $0 \leq \ell \leq d-1$ be given.  Then there exists a weakly
$d$-cluster tilting subcategory $\sT_{\ell}$ of $\sC$ with an
indecomposable object $t$ such that $\sT_{\ell}$ can be mutated at $t$
in precisely $\ell$ ways.
}

{\bf Theorem C. }
{\em 
Let $\sT$ be a weakly $d$-cluster tilting subcategory of $\sC$ and
let $t \in \sT$ be indecomposable.  Then $\sT$ can be mutated at $t$
in at most $d$ ways.
}

\medskip

The interest of Theorems A and C depends on a rich supply of (weakly)
$d$-cluster tilting subcategories in $\sC$.  Indeed, such a supply
exists by the following two theorems.  As a prelude, note that there
is a bijection between subcategories $\sT \subseteq \sC$ closed under
direct sums and summands, and sets of $d$-admissible arcs $\fT$; see
Section \ref{sec:arcs}, in particular Proposition
\ref{pro:subcategory_bijection}.  A $d$-admissible arc is an arc in
the upper half plane connecting two integers $t$, $u$ with $u - t \geq
2$ and $u - t \equiv 1 \pmod d$.

\medskip

{\bf Theorem D. }
{\em 
The subcategory $\sT$ is weakly $d$-cluster tilting if and only if
the corresponding set of $d$-admissible arcs $\fT$ is a $( d+2
)$-angulation of the $\infty$-gon. 
}

{\bf Theorem E. }
{\em
The subcategory $\sT$ is $d$-cluster tilting if and only if the
corresponding set of $d$-admissible arcs $\fT$ is a $( d+2 )$-angulation
of the $\infty$-gon which is either locally finite or has a fountain. 
}

\medskip

We defer the definition of ``$( d+2 )$-angulation of the
$\infty$-gon'' and other unexplained notions to Definition
\ref{def:arcs} and merely offer Figure \ref{fig:4-angulation} which
shows part of a $4$-angulation of the $\infty$-gon with a fountain at
$0$.
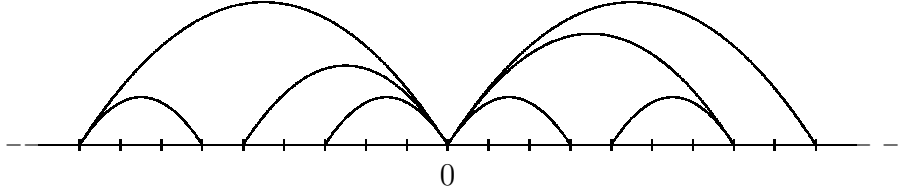
\begin{figure}
\[
 \xymatrix @-5.6pc @R=-9ex @! {
      \rule{0ex}{10.0ex} \ar@{--}[r]
    & *{}\ar@{-}[r]
    & *{\rule{0.1ex}{0.8ex}} \ar@{-}[r] 
    & *{\rule{0.1ex}{0.8ex}} \ar@{-}[r] 
    & *{\rule{0.1ex}{0.8ex}} \ar@{-}[r] 
    & *{\rule{0.1ex}{0.8ex}} \ar@{-}[r] \ar@/^-1.5pc/@{-}[lll]
    & *{\rule{0.1ex}{0.8ex}} \ar@{-}[r] 
    & *{\rule{0.1ex}{0.8ex}} \ar@{-}[r] 
    & *{\rule{0.1ex}{0.8ex}} \ar@{-}[r] 
    & *{\rule{0.1ex}{0.8ex}} \ar@{-}[r] 
    & *{\rule{0.1ex}{0.8ex}} \ar@{-}[r] 
    & *{\rule{0.1ex}{0.8ex}} \ar@{-}[r] \ar@/^-1.5pc/@{-}[lll] \ar@/^-2.5pc/@{-}[lllll]\ar@/^-4.5pc/@{-}[lllllllll]\ar@/^1.5pc/@{-}[rrr]\ar@/^3.5pc/@{-}[rrrrrrr]\ar@/^4.5pc/@{-}[rrrrrrrrr]
    & *{\rule{0.1ex}{0.8ex}} \ar@{-}[r] 
    & *{\rule{0.1ex}{0.8ex}} \ar@{-}[r]   
    & *{\rule{0.1ex}{0.8ex}} \ar@{-}[r]
    & *{\rule{0.1ex}{0.8ex}} \ar@{-}[r] 
    & *{\rule{0.1ex}{0.8ex}} \ar@{-}[r] 
    & *{\rule{0.1ex}{0.8ex}} \ar@{-}[r] 
    & *{\rule{0.1ex}{0.8ex}} \ar@{-}[r] \ar@/^-1.5pc/@{-}[lll] 
    & *{\rule{0.1ex}{0.8ex}} \ar@{-}[r] 
    & *{\rule{0.1ex}{0.8ex}} \ar@{-}[r]   
    & *{}\ar@{--}[r]
    & *{} \\
    & & & & & & & & & & & 0 \\
                   }
\]
\caption{Part of a $4$-angulation of the $\infty$-gon.}
\label{fig:4-angulation}
\end{figure}
Note how the arcs divide the upper half plane into a collection of
`quadrangular' regions, each with four integers as `vertices'.  Some
of the vertices sit at cusps.

We end the introduction with a few remarks about the category $\sC$
which has been studied intensively in a number of recent papers
\cite{FY}, \cite{HJ}, \cite{HJY}, \cite{J}, \cite{KYZ}, \cite{Ng},
\cite{ZZ}.  It is determined up to triangulated equivalence by
\cite[thm.\ 2.1]{KYZ}.  It is a Krull-Schmidt and $( d+1 )$-Calabi-Yau
category by \cite[rmk.\ 1 and prop.\ 1.8]{HJY}, and a number of other
properties can be found in \cite[secs.\ 1 and 2]{HJY}.  Theorems A and
E are two reasons for viewing $\sC$ as a cluster category of type
$A_{\infty}$, since they are infinite versions of the corresponding
theorems in type $A_n$; see \cite[thm.\ 3]{T} for Theorem A and
\cite[prop.\ 2.13]{M} and \cite[thm.\ 1]{T} for Theorem E.
See also \cite{HJ} for the case $d = 1$.

The paper is organised as follows: Section \ref{sec:arcs} introduces
$d$-admissible arcs into the study of the triangulated category $\sC$
and proves Theorem D.  Section \ref{sec:approximations} proves Theorem
E.  Section \ref{sec:combinatorics} shows some technical results on $(
d+2 )$-angulations of the $\infty$-gon.  Section \ref{sec:proofs}
proves Theorems A, B, and C.

\begin{Notation}
\label{not:lax}
We write $\ind( \sC )$ for the set of isomorphism classes of
indecomposable objects in $\sC$.  We will follow the custom of being
lax about the distinction between {\em indecomposable objects} and
{\em isomorphism classes of indecomposable objects}.  This makes the
language a bit less precise, but avoids excessive elaborations.

The word {\em subcategory} will always mean {\em full subcategory
  closed under isomorphisms, direct sums, and direct summands}.  In
particular, a subcategory is determined by the indecomposable objects
it contains.
\end{Notation}

\section{The arc picture of $\sC$}
\label{sec:arcs}

\begin{Remark}
By \cite[prop.\ 1.10]{HJY}, the Auslander-Reiten (AR) quiver of $\sC$
consists of $d$ components, each of which is a copy of $\BZ
A_{\infty}$, and $\Sigma$ acts cyclically on the set of components.
\end{Remark}

\begin{Construction}
\label{con:coordinates}
We pick a component of the AR quiver of $\sC$ and impose the
coordinate system in Figure \ref{fig:coordinate_system}.
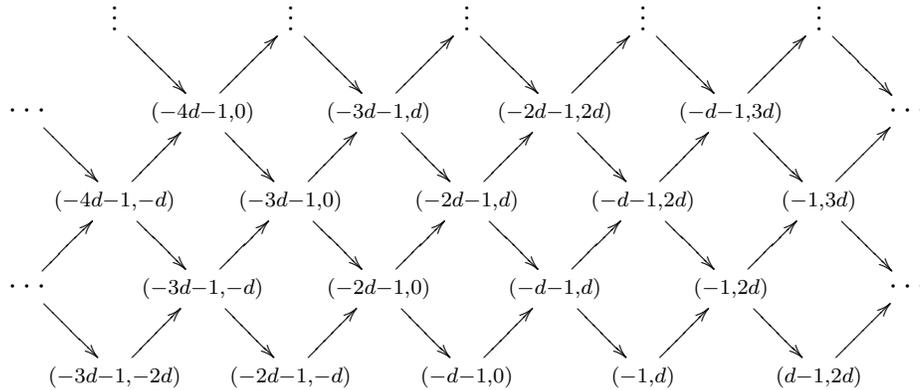
\begin{figure}
\[
  \xymatrix @-4.0pc @! {
    & \vdots \ar[dr] & & \vdots \ar[dr] & & \vdots \ar[dr] & & \vdots \ar[dr] & & \vdots \ar[dr] & \\
    \cdots \ar[dr]& & {\scriptstyle (-4d-1,0)} \ar[ur] \ar[dr] & & {\scriptstyle (-3d-1,d)} \ar[ur] \ar[dr] & & {\scriptstyle (-2d-1,2d)} \ar[ur] \ar[dr] & & {\scriptstyle (-d-1,3d)} \ar[ur] \ar[dr] & & \cdots \\
    & {\scriptstyle (-4d-1,-d)} \ar[ur] \ar[dr] & & {\scriptstyle (-3d-1,0)} \ar[ur] \ar[dr] & & {\scriptstyle (-2d-1,d)} \ar[ur] \ar[dr] & & {\scriptstyle (-d-1,2d)} \ar[ur] \ar[dr] & & {\scriptstyle (-1,3d)} \ar[ur] \ar[dr] & \\
    \cdots \ar[ur]\ar[dr]& & {\scriptstyle (-3d-1,-d)} \ar[ur] \ar[dr] & & {\scriptstyle (-2d-1,0)} \ar[ur] \ar[dr] & & {\scriptstyle (-d-1,d)} \ar[ur] \ar[dr] & & {\scriptstyle (-1,2d)} \ar[ur] \ar[dr] & & \cdots\\
    & {\scriptstyle (-3d-1,-2d)} \ar[ur] & & {\scriptstyle (-2d-1,-d)} \ar[ur] & & {\scriptstyle (-d-1,0)} \ar[ur] & & {\scriptstyle (-1,d)} \ar[ur] & & {\scriptstyle (d-1,2d)} \ar[ur] & \\
               }
\]
\caption{The coordinate system on one of the components of the AR
quiver of $\sC$.}
\label{fig:coordinate_system}
\end{figure}
We think of coordinate pairs as indecomposable objects of $\sC$, and
extend the coordinate system to the other components of the quiver by
setting
\begin{equation}
\label{equ:Sigma}
  \Sigma( t , u ) = ( t - 1 , u - 1 ).
\end{equation}
By \cite[prop.\ 1.8]{HJY}, the Serre functor of $\sC$ is $S =
\Sigma^{d+1}$.  The actions of $S$ and the AR
translation $\tau = S\Sigma^{-1}$ are given on objects by
\begin{equation}
\label{equ:S_tau}
  S( t , u ) = ( t - d - 1 , u - d - 1 ),
  \;\;\;
  \tau( t , u ) = ( t - d , u - d ).
\end{equation}
Like $\Sigma$, the Serre functor $S$ acts cyclically on the set of
components of the AR quiver.  Indeed, since there are $d$ components,
the two functors have the same action on the set of components.  The
AR translation $\tau$ is given on each component of the AR quiver by
moving one vertex to the left.
\end{Construction}

We also think of the coordinate pair $( t,u )$ as an arc in the upper
half plane connecting the integers $t$ and $u$.  The ensuing
geometrical picture is illustrated by Figure \ref{fig:4-angulation}.
However, not all values of $( t , u )$ are possible.  Indeed, it is
easy to check that the coordinate pairs which occur in Construction
\ref{con:coordinates} are precisely the $d$-admissible arcs in the
following definition.

\begin{Definition}
\label{def:arcs}
A pair of integers $( t , u )$ with $u - t \geq 2$ and $u - t \equiv 1
\pmod d$ is called a {\em $d$-admissible arc}.

The {\em length} of the arc $( t , u )$ is $u - t$.

The arcs $( r , s )$ and $( t , u )$ {\em cross} if $r < t < s < u$ or
$t < r < u < s$.  Moreover, $( r,s )$ is an {\em overarc} of $( t,u )$
if $( r,s ) \neq ( t,u )$ and $r \leq t < u \leq s$.

Let $\fT$ be a set of $d$-admissible arcs.

We say that $\fT$ is a {\em $( d+2 )$-angulation of the $\infty$-gon}
if it is a maximal set of pairwise non-crossing $d$-admissible arcs.

We say that $\fT$ is {\em locally finite} if, for each integer $t$,
there are only finitely many arcs of the form $( s , t )$ and $( t , u
)$ in $\fT$.

An integer $t$ is a {\em left-fountain of $\fT$} if $\fT$ contains
infinitely many arcs of the form $( s , t )$, and $t$ is a {\em
right-fountain of $\fT$} if $\fT$ contains infinitely many arcs of the
form $( t , u )$.  We say that $t$ is a {\em fountain} of $\fT$ if it
is both a left- and a right-fountain of $\fT$.
\end{Definition}

The first part of the following proposition is a consequence of what
we did above.  The second part follows from the first because our
subcategories are determined by the indecomposable objects they
contain, see Notation \ref{not:lax}.

\begin{Proposition}
\label{pro:subcategory_bijection}
Construction \ref{con:coordinates} gives a bijective correspondence
between $\ind( \sC )$ and the set of $d$-admissible arcs.

This extends to a bijective correspondence between {\rm (i)}
subcategories of $\sC$ and {\rm (ii)} subsets of the set of
$d$-admissible arcs.
\end{Proposition}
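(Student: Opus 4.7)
The proof plan is straightforward bookkeeping with the coordinate system introduced in Construction \ref{con:coordinates}, as signalled by the author's remark that the first part ``is a consequence of what we did above''. The plan is to verify three small claims about the labelling of indecomposables by coordinate pairs: well-definedness, injectivity, and surjectivity onto the set of $d$-admissible arcs. Then the second assertion will follow immediately from the conventions of Notation \ref{not:lax}.

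First I would read off from Figure \ref{fig:coordinate_system} that every coordinate pair $(t,u)$ appearing in the displayed AR component satisfies $u - t \geq 2$ and $u - t \equiv 1 \pmod{d}$. The lengths $u - t$ occurring in successive ``rows'' of that component are $d+1, 2d+1, 3d+1, \ldots$, which are all $\geq 2$ and $\equiv 1 \pmod{d}$. Because the coordinates on the remaining $d-1$ components are defined using $\Sigma(t,u) = (t-1,u-1)$, which preserves the difference $u - t$, the same congruence holds in every component. Hence each indecomposable is labelled by a genuine $d$-admissible arc.

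Next I would address injectivity and surjectivity together. Within the displayed component the coordinate labels are pairwise distinct by inspection. On passing from one component to the next via $\Sigma$, the first coordinate $t$ shifts by $-1$, so the residue of $t$ modulo $d$ is constant on each component and takes $d$ different values across the $d$ components of the AR quiver. Consequently, indecomposables in distinct components receive coordinates with distinct residues of $t$ modulo $d$, giving injectivity. Conversely, given any $d$-admissible arc $(t,u)$, the residue class of $t$ modulo $d$ singles out the unique component in which it must lie, and inspection of the corresponding $\Sigma$-translate of Figure \ref{fig:coordinate_system} shows that $(t,u)$ appears there as a vertex, giving surjectivity.

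The second assertion is then immediate: by Notation \ref{not:lax}, a subcategory of $\sC$ is determined by the set of isomorphism classes of its indecomposable objects, so the bijection $\ind(\sC) \leftrightarrow \{d\text{-admissible arcs}\}$ extends by taking subsets on both sides, and sends a subcategory $\sT$ to the set $\fT$ of arcs labelling its indecomposable objects. I do not expect any real obstacle; the only point one must not slip on is the residue-mod-$d$ argument that parametrises the $d$ components of the AR quiver, which keeps the map from collapsing different components to the same set of arcs.
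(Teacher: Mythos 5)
Your proposal is correct and matches the paper's (essentially omitted) argument: the paper simply asserts the first part is "a consequence of what we did above" and derives the second part from the convention in Notation \ref{not:lax} that subcategories are determined by their indecomposables, which is exactly what you verify in detail. Your residue-of-$t$-mod-$d$ bookkeeping for separating the $d$ components is the right way to fill in the unstated details, and it is consistent with the formulas $\Sigma(t,u)=(t-1,u-1)$ and $\tau(t,u)=(t-d,u-d)$ in Construction \ref{con:coordinates}.
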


\begin{Definition}
Let $x \in \ind( \sC )$ be given.  Figure \ref{fig:5} defines two
infinite sets $F^{\pm}( x )$ consisting of vertices in the same
component of the AR quiver as $x$.  Each set contains $x$ and all
other vertices inside the indicated boundaries; the boundaries are
included in the sets.
\begin{figure}
\[
\vcenter{
  \xymatrix @-3.0pc @! {
    &&&*{} &&&&&&&& \\
    &&&& *{} \ar@{--}[ul] & & & & *{} \ar@{--}[ur] \\
    &*{}&& F^-(x) & & & & & & F^+(x) && *{}\\
    &&*{}\ar@{--}[ul]& & & & {x} \ar@{-}[ddll] \ar@{-}[uull] \ar@{-}[ddrr] \ar@{-}[uurr]& & &&*{}\ar@{--}[ur]&\\ 
    && \\
    *{}\ar@{--}[r]&*{} \ar@{-}[rrr] && & *{}\ar@{-}[uull]\ar@{-}[rrrrrr]& & & & \ar@{-}[uurr]\ar@{-}[rrr]&&&*{}\ar@{--}[r]&*{}\\
           }
}
\]
\caption{The sets $F^{\pm}(x)$.}
\label{fig:5}
\end{figure}
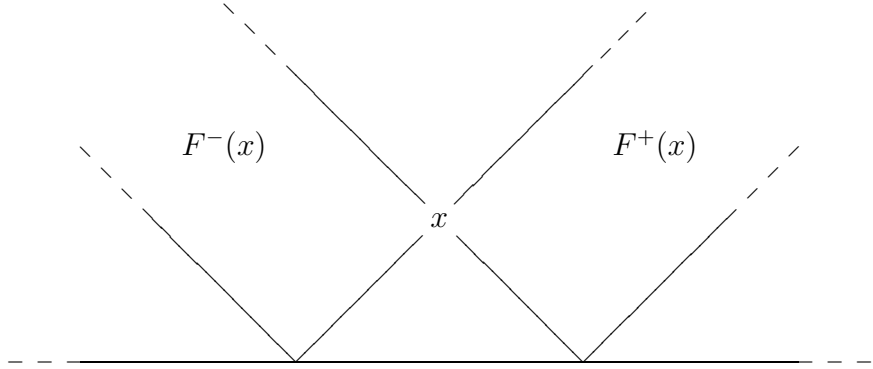
\end{Definition}

Recall that $S = \Sigma^{d+1}$ is the Serre functor of $\sC$.

\begin{Proposition}
\label{pro:Hom}
Let $x, y \in \ind( \sC )$.  Then
\[
  \dim_k \sC( x , y ) = 
  \left\{
    \begin{array}{cl}
      1 & \mbox{for $y \in F^+( x ) \cup F^-( Sx )$}, \\[2pt]
      0 & \mbox{otherwise}.
    \end{array}
  \right.
\]
\end{Proposition}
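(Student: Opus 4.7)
The plan is to reduce the computation to within a single AR component via Serre duality, and then to compute the within-component Hom-spaces inductively using Auslander-Reiten triangles. The starting point is the $(d+1)$-Calabi-Yau isomorphism $\sC(x,y) \cong D\sC(y,Sx)$ with $S = \Sigma^{d+1}$, coming from the fact that $\sC$ is $(d+1)$-Calabi-Yau \cite[prop.\ 1.8]{HJY}. Combined with the self-evident combinatorial symmetry $y \in F^+(x) \Leftrightarrow x \in F^-(y)$ that is visible from Figure \ref{fig:5}, this duality identifies the two non-trivial cases of the proposition: once the formula is known for $y$ in the component of $x$ (the $F^+(x)$ case), it follows for $y$ in the component of $Sx$ (the $F^-(Sx)$ case) by interchanging the roles of source and target.

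For the within-component calculation, I would induct on the distance from $x$ in the AR quiver. The base case is $\sC(x,x) \cong k$, which holds because $\sC$ is Hom-finite Krull-Schmidt over the algebraically closed field $k$. The inductive step applies $\sC(x,-)$ to the AR triangle $\tau y \to e \to y \to \Sigma\tau y$ at an indecomposable $y$, producing a four-term exact sequence
\[
  \sC(x,\tau y) \to \sC(x,e) \to \sC(x,y) \to \sC(x,\Sigma \tau y).
\]
Using $\Sigma\tau = S$, the last term is $\sC(x,Sy)$, which by Serre duality equals $D\sC(y,x)$; but $\sC(y,x)$ is within the same component as the terms $\sC(x,\tau y)$, $\sC(x,e)$ and is therefore already controlled by a smaller instance of the induction. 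So the exact sequence determines $\dim_k \sC(x,y)$ purely from earlier data, and a direct check shows that it produces $1$ precisely for $y \in F^+(x)$ and $0$ otherwise, matching the mesh structure of a $\BZ A_\infty$ component.

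Finally, for $y$ in any AR component other than those of $x$ and $Sx$, one combines Serre duality with the previous two steps: $\sC(x,y)$ and $\sC(y,Sx)$ both involve a source and target in non-adjacent components, and the inductive four-term sequences force them to vanish, giving the required zeros outside $F^+(x) \cup F^-(Sx)$.

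The main obstacle will be setting up the induction so that every application of the four-term sequence involves only quantities already known, with no circularity across components. I would order the induction by the sum of the absolute differences of the $(t,u)$-coordinates from Construction \ref{con:coordinates}, so that each inductive step strictly reduces this invariant; the boundary behaviour of $F^+(x)$ (where exactly one of $\tau y \in F^+(x)$ or $y \in F^+(x)$ fails) is where the dimension drops from $1$ to $0$ and requires the most care.
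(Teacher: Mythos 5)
Note first that the paper does not actually prove Proposition \ref{pro:Hom}: its ``proof'' is the single citation \cite[prop.\ 2.2]{HJY}, where the formula is obtained by direct computation in an explicit model of $\sC$ (compact objects over a polynomial DG algebra, with indecomposables realised concretely). Your AR-theoretic induction is therefore necessarily a different route, and in principle an attractive one; but as written it has a genuine gap at its central step. From a four-term exact sequence $\sC(x,\tau y) \to \sC(x,e) \to \sC(x,y) \to \sC(x,Sy)$ one \emph{cannot} read off $\dim_k \sC(x,y)$ from the dimensions of the other three terms: exactness constrains $\dim_k\sC(x,y)$ only between the corank of the first map and that corank plus $\dim_k\sC(x,Sy)$. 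To pin it down you need (a) the lifting property of AR triangles, which gives surjectivity of $\sC(x,e)\to\sC(x,y)$ when $x\not\cong y$, and (b) the rank of $\sC(x,\tau y)\to\sC(x,e)$, which forces you one step further left in the long exact sequence into $\sC(x,\Sigma^{-1}y)$ --- a Hom-space whose target lies in a \emph{different} AR component (for $d\geq 2$, $\Sigma$ permutes the $d$ components). So the ``within-component'' calculation is not closed under the recursion you describe, and the claim that the sequence ``determines $\dim_k\sC(x,y)$ purely from earlier data'' is unsubstantiated.

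There is a second, related problem with the induction scheme itself. You handle the term $\sC(x,Sy)\cong D\sC(y,x)$ by calling it ``a smaller instance,'' but it is an instance of the proposition with base point $y$ rather than $x$, so you are really running a simultaneous induction over all ordered pairs and must exhibit a well-founded order on pairs that decreases under every reduction used. Your proposed invariant (sum of absolute coordinate differences) decreases under $y\mapsto\tau y$ only when $y$ lies to the right of $x$; for $y$ to the left of $x$ --- precisely the region where you must establish the vanishing $\sC(x,y)=0$ --- the invariant \emph{increases}, so the induction does not terminate there. None of this means the strategy is unsalvageable: one can compute $\dim_k\sC(s,-)$ on the whole quiver directly from the sphericality of $s$ and the mesh relations, and then propagate along slices, or simply fall back on the explicit model as \cite{HJY} does. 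But the argument as proposed does not yet constitute a proof.
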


\begin{proof}
See \cite[prop.\ 2.2]{HJY}.
\end{proof}

In other words, $x$ has non-zero maps to a region $F^+( x )$ in the
same component of the AR quiver as itself, and to a region $F^-( Sx )$
in the ``next'' component of the AR quiver.  Note that if $d = 1$ then
the quiver has only one component so $F^-( Sx )$ is in the same
component as $x$.

\begin{Remark}
\label{rmk:Hom}
It is not hard to check that $y \in F^+( x ) \Leftrightarrow x \in
F^-( y )$.  So the proposition is equivalent to
\[
  \dim_k \sC( x , y ) = 
  \left\{
    \begin{array}{cl}
      1 & \mbox{for $x \in F^+( S^{ -1 } y ) \cup  F^-( y )$}, \\[2pt]
      0 & \mbox{otherwise}.

    \end{array}
  \right.
\]
\end{Remark}

The following proposition is simple but crucial since it leads
straight to Theorem D.

\begin{Proposition}
\label{pro:crossing}
Let $\fx, \fy$ be $d$-admissible arcs corresponding to $x, y \in
\ind( \sC )$.  Then $\fx$ and $\fy$ cross if and only if at least one
of the $\Hom$-spaces
\[
  \sC( x , \Sigma^1 y )
  \;\; , \;\; \ldots \;\; , \;\;
  \sC( x , \Sigma^d y )
\]
is non-zero.
\end{Proposition}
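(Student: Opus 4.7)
The plan is to translate both conditions into the common language of the arc/AR-quiver dictionary provided by Construction~\ref{con:coordinates} and Proposition~\ref{pro:Hom}, and then verify that they describe the same set of arcs.

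First, applying Proposition~\ref{pro:Hom} to the pair $(x,\Sigma^i y)$ and using that $\Sigma$ is an autoequivalence with $S=\Sigma^{d+1}$, the Hom-space $\sC(x,\Sigma^i y)$ is non-zero if and only if
\[
  y \in F^+(\Sigma^{-i}x) \cup F^-(\Sigma^{d+1-i}x).
\]
As $i$ ranges over $\{1,\dots,d\}$, the exponents $-i$ and $d+1-i$ sweep bijectively through the disjoint sets $\{-d,\dots,-1\}$ and $\{1,\dots,d\}$, i.e., over the non-zero residues modulo $d+1$. So the disjunction ``some $\sC(x,\Sigma^i y)\neq 0$ for $1\le i\le d$'' translates to $y$ lying in a union of $2d$ wedge-shaped regions, two of which live in the component of $x$ (arising from $\Sigma^{\pm d}x$) and two in each of the other $d-1$ components of the AR quiver. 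Serre duality $\sC(x,\Sigma^i y)\cong D\sC(y,\Sigma^{d+1-i}x)$ renders this union symmetric in $x$ and $y$, matching the evident symmetry of the crossing relation.

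Next, one translates each of these $2d$ wedges into endpoint inequalities on arcs via equation~(\ref{equ:Sigma}) and the coordinatization in Figure~\ref{fig:coordinate_system}. Writing $\fx=(a,b)$ and $\fy=(c,e)$, the goal is to show that when the $2d$ endpoint conditions are consolidated, the resulting union collapses to exactly
\[
  (a<c<b<e) \;\;\text{ or }\;\; (c<a<e<b),
\]
which is the arc-crossing condition.

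The main obstacle is this consolidation step. I would exploit the $(x,y)$-symmetry of the problem coming from Serre duality to assume WLOG $a\le c$, and then split into subcases by the position of $e$ relative to $b$: nested ($e\le b$), crossing ($c<b<e$), or disjoint ($b\le c$). The $d$-admissibility constraint $u-t\equiv 1\pmod d$ on the arc endpoints is essential here: it ensures that as one sweeps across the $2d$ wedges the permissible arc coordinates fit together without gaps or overlaps, so that the union hits each crossing arc exactly once and no non-crossing arc. The subtlest point is the boundary behaviour --- arcs sharing an endpoint with $\fx$, as well as arcs of the form $\Sigma^j\fx$ themselves --- whose Hom contributions land in degrees $0$ or $d+1$, both outside the allowed range $\{1,\dots,d\}$, and so are correctly excluded.
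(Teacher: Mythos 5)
Your proposal follows essentially the same route as the paper: both reduce non-vanishing of $\sC(x,\Sigma^{i}y)$ to membership of a suitable $\Sigma$-shift in the regions $F^{+}$ and $F^{-}$ of Proposition~\ref{pro:Hom}, translate this into endpoint inequalities via the coordinate system of Construction~\ref{con:coordinates}, and then check that the union over $i\in\{1,\dots,d\}$ is exactly the crossing condition. The only difference is presentational (you move the regions onto $y$ rather than moving $\Sigma^{i}y$ into fixed regions, and you sketch the final case analysis where the paper records the explicit conditions \eqref{equ:b} and \eqref{equ:c} before likewise declaring the comparison with the crossing condition ``elementary to check'').
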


\begin{proof}
For $1 \leq \ell \leq d$, the condition that
$\sC( x , \Sigma^{ \ell }y ) \neq 0$ is equivalent to $\Sigma^{ \ell
}y \in F^+( x )$ or $\Sigma^{ \ell }y \in F^-( Sx )$ by Proposition
\ref{pro:Hom}.  If we write $\fx = ( r,s )$, $\fy = ( t,u )$, then,
using equations \eqref{equ:Sigma} and \eqref{equ:S_tau} and the
coordinate system on the AR quiver of $\sC$, it is elementary to check
that
\begin{align}
\label{equ:b}
  \Sigma^{ \ell }y \in F^+( x )
  & \Leftrightarrow
    \left\{
      \begin{array}{l}
        u \equiv s + \ell \!\!\!\!\! \pmod d, \\[1mm]
        r + \ell \leq t \leq s + \ell - d - 1, \\[1mm]
        s + \ell \leq u,
      \end{array}
    \right. \\
\label{equ:c}
  \Sigma^{ \ell }y \in F^- ( Sx )
  & \Leftrightarrow
    \left\{
      \begin{array}{l}
        u \equiv s + \ell - 1 \!\!\!\!\! \pmod d, \\[1mm]
        t \leq r + \ell - d - 1, \\[1mm]
        r + \ell \leq u \leq s + \ell - d - 1.
      \end{array}
    \right.
\end{align}
The condition that at least one of the $\Hom$ spaces $\sC( x ,
\Sigma^1 y ), \ldots, \sC( x , \Sigma^d y )$ is non-zero is hence
equivalent to the existence of at least one $\ell$ with $1 \leq \ell
\leq d$ such that the right hand side of \eqref{equ:b} or
\eqref{equ:c} is true.  It is again elementary to check that this is
equivalent to the condition that $\fx = ( r,s )$ and $\fy = ( t,u )$
cross. 
\end{proof}

{\bf Proof of Theorem D. }
Combine the definition of weakly $d$-cluster tilting subcategories
with Propositions \ref{pro:subcategory_bijection} and
\ref{pro:crossing}.
\hfill $\Box$

\section{Left- and right-approximating subcategories}
\label{sec:approximations}

\begin{Proposition}
\label{pro:forward_factorization}
Let $x, y \in \ind( \sC )$ be such that $y \in F^+( x )$.
\begin{enumerate}

  \item  Each morphism $x \rightarrow y$ is a scalar multiple of a
composition of irreducible morphisms.

\smallskip

  \item  A morphism $x \rightarrow y$ which is a composition of
irreducible morphisms is non-zero.
\end{enumerate}
Keeping $x$, $y$ as above, let $z \in \ind( \sC )$ be such that $z
\in F^+( x ) \cap F^+( y )$. 
\begin{enumerate}
\setcounter{enumi}{2}

  \item  Non-zero morphisms $x \rightarrow y$, $y \rightarrow z$
  compose to a non-zero morphism $x \rightarrow z$.

\smallskip

  \item  If $y \stackrel{\psi}{\rightarrow} z$ is a non-zero morphism,
  then each morphism $x \rightarrow z$ factors as $x \rightarrow y
  \stackrel{\psi}{\rightarrow} z$.

\end{enumerate}
\end{Proposition}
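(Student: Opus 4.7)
The proof rests on two inputs already at hand: each component of the AR quiver of $\sC$ has shape $\BZ A_{\infty}$, and $\dim_{k} \sC(x,y) = 1$ whenever $y \in F^+(x)$ by Proposition \ref{pro:Hom}. Parts (iii) and (iv) will follow formally from (i), (ii), and this one-dimensionality, so the real work lies in (i) and (ii).

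\textbf{Parts (i) and (ii).} I would split the argument into two complementary steps. First, using the standard mesh relations in a component of type $\BZ A_{\infty}$, one shows that compositions of irreducible morphisms along any two paths from $x$ to $y$ inside the forward cone $F^+(x)$ agree up to sign. It then suffices to exhibit one such composition that is non-zero. For this, I would start from a non-zero $\alpha \colon x \to y$ (provided by Proposition \ref{pro:Hom}) and apply the AR triangle $\tau y \to u \to y \to \Sigma \tau y$: since $y \not\cong x$ on a path of positive length, $\alpha$ is a non-isomorphism and factors through the right almost split morphism $u \to y$. Projecting onto an appropriate indecomposable summand $u_{i}$ of $u$ (whose existence inside $F^+(x)$ is guaranteed by the shape of the cone in Figure \ref{fig:5} together with the coordinate system of Construction \ref{con:coordinates}) yields a non-zero $x \to u_{i}$ followed by an irreducible $u_{i} \to y$. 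Iterating this factorization along successive AR triangles terminates in finitely many steps at the identity $x \to x$, exhibiting $\alpha$ as a scalar multiple of a composition of irreducibles. This proves (i), and combining it with the mesh-relation step gives (ii).

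\textbf{Parts (iii) and (iv).} Given (i) and (ii), non-zero morphisms $f \colon x \to y$ and $g \colon y \to z$ are scalar multiples of compositions of irreducibles along paths in $F^+(x)$ and $F^+(y)$ respectively. Concatenating these paths produces a composition of irreducibles $x \to z$, which is well-defined because $z \in F^+(x) \cap F^+(y)$ keeps the entire path inside the forward cone; by (ii) this composition is non-zero, so $g \circ f$ is a non-zero scalar multiple of it, proving (iii). Part (iv) then follows at once: post-composition with $\psi$ defines a $k$-linear map $\sC(x,y) \to \sC(x,z)$ between one-dimensional spaces, which is non-zero by (iii), hence an isomorphism; surjectivity gives the desired factorization through $\psi$.

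\textbf{Main obstacle.} The delicate point is the non-vanishing assertion underlying (ii): the mesh relations alone only describe how different compositions of irreducibles compare with each other, not that any one of them is non-zero. The AR-triangle iteration sketched above circumvents this by constructing a non-zero composition from a given non-zero morphism, but keeping the iteration inside $F^+(x)$ at every step and verifying termination requires careful coordinate book-keeping using Construction \ref{con:coordinates} and the explicit description of the regions $F^{\pm}$.
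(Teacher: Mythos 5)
Your proposal is correct and follows essentially the same route as the paper: parts (i) and (ii) via iterated factorization through AR triangles combined with the mesh relations forcing all compositions of irreducibles $x \to y$ to be proportional, and parts (iii) and (iv) as formal consequences of (i), (ii) and the one-dimensionality of the $\Hom$-spaces from Proposition \ref{pro:Hom}. The "careful book-keeping" you flag is handled in the paper by restricting the factorization process to the rectangle spanned by $x$ and $y$ inside $F^+(x)$, which also guarantees termination.
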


\begin{proof}
(i) Let $x \stackrel{\varphi}{\rightarrow} y$ be a morphism.  If $y
\cong x$ then it follows from Proposition \ref{pro:Hom} that $\varphi$
is a scalar multiple of the identity, and then we can take the claimed
composition of irreducible morphisms to be empty.

If $y \not\cong x$, then let $\tau y \rightarrow y_1
\stackrel{\theta}{\rightarrow} y$ be the AR triangle ending in $y$.
Since $x$, $y$ are indecomposable, the morphism $\varphi$ is not a
split epimorphism so it factors as $x \rightarrow y_1
\stackrel{\theta}{\rightarrow} y$.

We can repeat this factorization process for the direct summands of
$y_1$ to which $x$ has non-zero morphisms, that is, the direct
summands of $y_1$ which are in the rectangle $R$ shown in Figure
\ref{fig:rectangle}; cf.\ Proposition \ref{pro:Hom}.
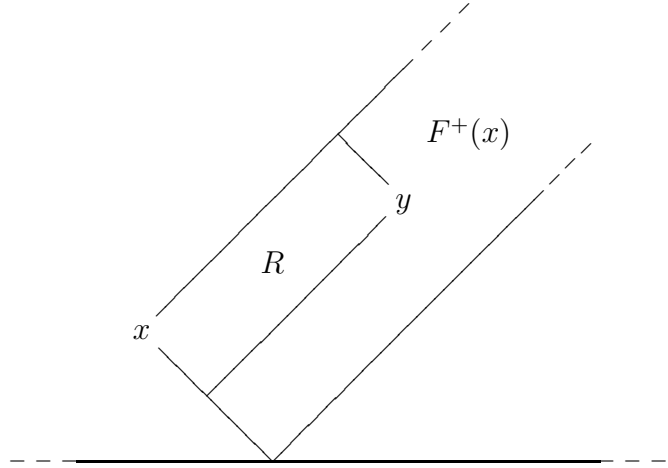
\begin{figure}
\[
  \xymatrix @-3.2pc @! {
    & & & & & & & *{} & \\
    & & & & & & *{} \ar@{--}[ur] & *{} & *{} &&\\
    & & & & & *{} & & F^+(x) & & & \\
    & & & & & & y \ar@{-}[dddlll] \ar@{-}[ul] & & *{}\ar@{--}[ur] & *{} & *{}\\
    & & & & R & & & & & \\
    *{} & & x \ar@{-}[ddrr] \ar@{-}[uuuurrrr]& & &&&&&\\ 
    & & & *{} & & & & *{} \\
    *{} \ar@{--}[r] & *{} \ar@{-}[rrr] & & & *{} \ar@{-}[uuuurrrr]\ar@{-}[rrrrr] &*{}&&&&*{}\ar@{--}[r]&*{}\\
           }
\]
\caption{The rectangle $R$ spanned by $x$ and $y$.}
\label{fig:rectangle}
\end{figure}
Successive repetitions show that the morphism $\varphi$ is a linear
combination of compositions of irreducible morphisms within $R$.
However, the mesh relations imply that any two such compositions
are scalar multiples of each other, so $\varphi$ is a composition of
irreducible morphisms. 

(ii)  By Proposition \ref{pro:Hom} there is a non-zero morphism $x
\rightarrow y$.  By part (i), it is a scalar multiple of a
composition of irreducible morphisms.  But as remarked in the proof of
part (i), two morphisms $x \rightarrow y$ which are both compositions
of irreducible morphisms are scalar multiples of each other, so it
follows that any such composition is non-zero.

(iii)  By part (i), each of the morphisms $x \rightarrow y$ and $y
\rightarrow z$ is a scalar multiple of a composition of irreducible
morphisms, so the same is true for the composition $x \rightarrow z$.
But $z$ is in $F^+(x)$, so $x \rightarrow z$ is non-zero by part
(ii). 

(iv) By Proposition \ref{pro:Hom} there is a non-zero morphism $x
\stackrel{\varphi}{\rightarrow} y$.  The composition $x
\stackrel{\psi\varphi}{\rightarrow} z$ is non-zero by part (iii).  But
the space $\sC( x , z )$ is $1$-dimensional by Proposition
\ref{pro:Hom}, so any morphism $x \rightarrow z$ can be factored as
$\psi \circ \alpha \varphi$ with $\alpha$ a scalar.
\end{proof}

\begin{Proposition}
\label{pro:backward_factorization}
Let $x, y, z \in \ind( \sC )$ be such that
$x \in F^+( S^{ -1 }y ) \cap F^+( S^{ -1 }z )$
and $z \in F^+(y)$.

If $y \stackrel{\psi}{\rightarrow} z$ is a non-zero morphism,
then each morphism $x \rightarrow z$ factors as $x \rightarrow y
\stackrel{\psi}{\rightarrow} z$.
\end{Proposition}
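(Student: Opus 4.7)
The plan is to reduce the backward factorization to the forward factorization already proved in Proposition \ref{pro:forward_factorization} by invoking Serre duality.

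First I would observe that by Remark \ref{rmk:Hom} the hypotheses $x \in F^{+}(S^{-1}y)$ and $x \in F^{+}(S^{-1}z)$ force $\sC(x,y)$ and $\sC(x,z)$ to be one-dimensional.  Hence it is enough to exhibit a single non-zero morphism $x \to z$ that factors through $\psi$, since every other morphism $x \to z$ will then be a scalar multiple and factor as well.  Equivalently, I need to show that the postcomposition map $\psi_{*} \colon \sC(x,y) \to \sC(x,z)$ is non-zero.

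Next I would use that $S = \Sigma^{d+1}$ is a Serre functor for $\sC$, so that $\sC(x,-) \cong D\sC(-,Sx)$ naturally.  Under this isomorphism $\psi_{*}$ is the $k$-linear dual of the precomposition map $\psi^{*} \colon \sC(z,Sx) \to \sC(y,Sx)$, and the two maps have the same rank.  Thus it suffices to produce a morphism $\alpha \colon z \to Sx$ for which $\alpha \psi \neq 0$.  To verify that the ambient regions allow me to do this, I would translate the hypotheses using Remark \ref{rmk:Hom} together with the fact that the autoequivalence $S$ carries $F^{-}(x)$ to $F^{-}(Sx)$:
\[
 x \in F^{+}(S^{-1}y) \;\Longleftrightarrow\; S^{-1}y \in F^{-}(x) \;\Longleftrightarrow\; y \in F^{-}(Sx) \;\Longleftrightarrow\; Sx \in F^{+}(y),
\]
and similarly $Sx \in F^{+}(z)$.

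With $z \in F^{+}(y)$ given and $Sx \in F^{+}(y) \cap F^{+}(z)$ now established, the hypotheses of Proposition \ref{pro:forward_factorization}(iii) are met (with $y,z,Sx$ playing the roles of $x,y,z$).  Picking any non-zero $\alpha \colon z \to Sx$, which exists by Proposition \ref{pro:Hom} since $Sx \in F^{+}(z)$, that part gives $\alpha \psi \neq 0$, so $\psi^{*} \neq 0$, hence $\psi_{*} \neq 0$, and the reduction in the first paragraph finishes the proof.  The only real obstacle I foresee is recognising Serre duality as the bridge from the ``backward'' setting to the ``forward'' setting; once that move is made, every other step is a direct translation of the $F^{+}(S^{-1}(-))$ hypotheses into $F^{+}(-)$ statements about $Sx$ and an application of the already proved forward result.
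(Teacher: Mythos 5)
Your proposal is correct and follows essentially the same route as the paper: reduce via Serre duality to showing that $\sC( \psi , Sx )$ is non-zero, translate the hypotheses into $Sx \in F^+( y ) \cap F^+( z )$, and conclude by Proposition \ref{pro:forward_factorization}(iii). The only cosmetic difference is that you exploit one-dimensionality of $\sC( x , z )$ up front where the paper phrases the reduction as surjectivity versus injectivity, but the substance is identical.
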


\begin{proof}
We must show that $\sC( x,\psi ) : \sC( x,y ) \rightarrow \sC( x,z )$
is surjective.  By Serre duality, it is equivalent to show that $\sC(
\psi,Sx ) : \sC( z,Sx ) \rightarrow \sC( y,Sx )$ is injective.  For
this it is enough to show that $\sC( \psi,Sx )$ is non-zero, since the
$\Hom$ spaces $\sC( z,Sx )$ and $\sC( y,Sx )$ have dimension $0$ or
$1$ over the ground field $k$ by Proposition \ref{pro:Hom}.

We must hence show that if $z \rightarrow Sx$ is non-zero, then so is
the composition $y \stackrel{\psi}{\rightarrow} z \rightarrow Sx$.
And this holds by Proposition \ref{pro:forward_factorization}(iii)
since we have $z \in F^+(y)$ and $Sx \in F^+( y ) \cap F^+( z )$; the
latter condition holds because it is equivalent to the assumption $x
\in F^+( S^{ -1 }y ) \cap F^+( S^{ -1 }z )$.
\end{proof}

\begin{Remark}
\label{rmk:approximating}
Recall that if $\sS$ is a subcategory of $\sC$ and $x \in \sC$ is
an object, then a right-$\sS$-approximation of $x$ is a morphism $s
\stackrel{\sigma}{\rightarrow} x$ with $s \in \sS$ such that each
morphism $s' \rightarrow x$ with $s' \in \sS$ factors through
$\sigma$.

If each $x \in \sC$ has a right-$\sS$-approximation, then $\sS$ is
called right-approximating.  There are dual notions with ``left''
instead of ``right''.
\end{Remark}

The following is a generalization of \cite[thm.\ 4.4]{HJ} and
\cite[thm.\ 2.2]{Ng}, and we follow the proofs of those results.

\begin{Proposition}
\label{pro:basic_d_neq_1}
Let $\sS$ be a subcategory of $\sC$ and let $\fS$ be the corresponding
set of $d$-admissible arcs.  The following conditions are equivalent.
\begin{enumerate}

  \item  The subcategory $\sS$ is right-approximating.

\smallskip

  \item  Each right-fountain of $\fS$ is a left-fountain of $\fS$.

\end{enumerate}
\end{Proposition}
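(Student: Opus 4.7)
The plan is to translate the existence of a right-$\sS$-approximation of an indecomposable $x \in \sC$ into a combinatorial statement about the arc $\fx$ and the set $\fS$, and then read off the fountain condition. Fix $x \in \ind(\sC)$ with arc $\fx$. By Proposition \ref{pro:Hom} together with Remark \ref{rmk:Hom}, the indecomposables $s$ with $\sC(s,x) \neq 0$ form the set $F^+( S^{-1}x ) \cup F^-( x )$, so the contributing arcs in $\fS$ are $\fE(x) := \fS \cap (F^+( S^{-1}x ) \cup F^-( x ))$. Since $\sS$ is closed under direct sums and summands and since Hom spaces are at most one-dimensional (Proposition \ref{pro:Hom}), a right-$\sS$-approximation of $x$ amounts to a finite sum $s = s_1 \oplus \cdots \oplus s_n$ of indecomposables in $\sS$ such that, for every indecomposable $s' \in \fE(x)$, a non-zero morphism $s' \to x$ factors through some chosen non-zero $s_i \to x$.

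The next step is to use Propositions \ref{pro:forward_factorization}(iv) and \ref{pro:backward_factorization} to identify this factorization relation with a geometric nesting of arcs: an arc $\fs' \in F^-(x)$ is dominated (in the approximation sense) by $\fs \in F^-(x)$ precisely when $\fs$ is an overarc of $\fs'$ (forward factorization through the vertex $s$ lying between $s'$ and $x$ in its own AR component), and a similar statement holds inside $F^+( S^{-1}x )$ via Proposition \ref{pro:backward_factorization}. Thus a right-$\sS$-approximation of $x$ exists if and only if $\fE(x)$ is the union of finitely many principal down-sets under the overarc order restricted to the two regions above $\fx$.

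For the direction (ii) $\Rightarrow$ (i), assume every right-fountain of $\fS$ is a left-fountain. Given $x$, I would split $\fE(x)$ into the two halves coming from $F^-(x)$ and $F^+(S^{-1}x)$. In each half, the overarc order is a disjoint union of chains indexed by the shared left (respectively right) endpoint. If such a chain fails to have a maximal arc, the left endpoints are bounded and the right endpoints increase without bound, forcing a right-fountain; the hypothesis then furnishes a matching left-fountain at the same integer, and I would use an arc from the left-fountain, suitably large, to act as a common overarc dominating the entire offending chain from outside $\fE(x)$'s unbounded direction. Finiteness of the remaining chains yields a finite collection $\{s_i\}$ whose direct sum is the desired right-approximation.

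For the direction (i) $\Rightarrow$ (ii), argue contrapositively: suppose $t$ is a right-fountain of $\fS$ but not a left-fountain, so there is an increasing sequence $u_1 < u_2 < \cdots$ with $(t,u_i) \in \fS$ but only finitely many arcs in $\fS$ ending at $t$. I would choose $x$ so that $\fx$ is an arc whose right endpoint is just to the right of $t$ and whose position ensures that every $(t,u_i)$ lies in $F^-(x) \cup F^+(S^{-1}x)$ (admissibility modulo $d$ picks out the precise arc). Each $(t,u_i)$ then contributes an independent chain of $\fE(x)$ sharing the left endpoint $t$, and by the previous step these chains can be simultaneously dominated only by arcs of the form $(s,t)$ with $s$ ranging over a left-fountain at $t$, which does not exist; a finite $\oplus s_i$ can therefore cover only finitely many of the $u_i$, so no right-$\sS$-approximation of $x$ exists.

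The principal obstacle is the fourth paragraph: pinning down the precise arc $\fx$ and verifying, via the coordinate formulas \eqref{equ:b} and \eqref{equ:c}, that the only arcs in $\fS$ that could dominate the infinite tower $\{(t,u_i)\}$ under the factorization relations of Propositions \ref{pro:forward_factorization}(iv) and \ref{pro:backward_factorization} are exactly arcs of a left-fountain at $t$. Once this case analysis is handled, both directions fall out of the equivalence between approximations and finite families of overarcs.
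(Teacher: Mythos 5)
Your overall strategy is the paper's: reduce to right-approximating each indecomposable $x$, identify the relevant objects as $\ind(\sS)\cap\bigl(F^-(x)\cup F^+(S^{-1}x)\bigr)$ via Proposition \ref{pro:Hom} and Remark \ref{rmk:Hom}, organise them along slices (your ``chains with a shared endpoint''), and let the problematic infinite chains correspond to fountains. But there is a genuine gap in the combinatorial core. The factorization relation is not the overarc order. By Proposition \ref{pro:forward_factorization}(iv), a morphism $s'\rightarrow x$ with $\fs'=(r_2,s_2)\in F^-(x)$ factors through $s\rightarrow x$ when $s\in F^+(s')$, i.e.\ $r_2\leq r_1<s_2\leq s_1$ for $\fs=(r_1,s_1)$; on a slice with common right endpoint this means the dominating arc is the \emph{shortest} one and the dominated arcs are its overarcs --- the opposite of what you assert. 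Worse, the arcs $(r,t)$ of a left-fountain that absorb the tower $(t,u_1),(t,u_2),\ldots$ of a right-fountain are not overarcs of the $(t,u_i)$ at all: they lie in a different AR component, and that domination is Proposition \ref{pro:backward_factorization}, not an instance of your order. So the criterion ``a right-approximation exists iff $\fE(x)$ is a finite union of principal down-sets for the overarc order'' is false as stated, and the chain analysis resting on it does not go through literally. (It is repairable: sum \emph{all} members of each finite chain, as the paper does, and treat infinite chains by Proposition \ref{pro:backward_factorization}.)

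The second, more serious, gap is in (i) $\Rightarrow$ (ii): the test object is not pinned down, and the natural reading fails. If you take $\fx$ with right endpoint at $t$ and minimal, say $\fx=(t-d-1,t)$, then every $(t,u_i)$ does map non-trivially to $x$, but a \emph{single} arc $(r,t)\in\fS$ already dominates the entire tower: Proposition \ref{pro:backward_factorization} applies with $y=(r,t)$ and $z=(t-d-1,t)$, since $(t,u_i)\in F^+(S^{-1}(r,t))\cap F^+(S^{-1}z)$ and $z\in F^+((r,t))$. Hence $x$ may admit a right-$\sS$-approximation even though $t$ is not a left-fountain, and no contradiction arises. To make the contrapositive work you must let $z$ run arbitrarily high up the slice of arcs ending at $t$ and show that any right-approximation of $z$ forces an object of $\sS$ on that slice strictly above $z$; this is the Krull--Schmidt/pigeonhole step in the paper's proof (an indecomposable summand $s'$ of the approximation receives non-zero maps from infinitely many $(t,u_i)$, which rules out $s'\in F^+(S^{-1}z)$ and places $s'$ on the slice above $z$). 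You flag this direction as ``the principal obstacle,'' and indeed it is: as sketched, the chosen $x$ does not witness the failure.
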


\begin{proof}
For $d = 1$ this is \cite[thm.\ 2.2]{Ng} so assume $d \geq 2$.

Recall the notion of a slice: If $( t , u )$ is a vertex on the base
line of the AR quiver of $\sC$, then the slice starting at $( t , u )$
is $( t , * )$; that is, it consists of the vertices with coordinates
of the form $( t , u' )$.  The slice ending at $( t , u )$ is $( * , u
)$.

This means that $t \in \BZ$ is a right-fountain of $\fS$ if and only
if $\sS$ has infinitely many indecomposable objects on the slice $( t
, * )$ starting at $( t , t + d + 1 )$.
Likewise, $t$ is a left-fountain of $\fS$ if and only if $\sS$ has
infinitely many indecomposable objects on the slice $( * , t )$ ending
at $( t - d - 1 , t ) = S( t , t + d + 1 )$.  Hence (ii) is equivalent
to the following condition on $\sS$.
\begin{itemize}

  \item[(ii')]  Let $v \in \ind( \sC )$ be on the base line of the AR
  quiver of $\sC$.  If $\sS$ has infinitely many indecomposable
  objects on the slice starting at $v$, then it has infinitely many
  indecomposable objects on the slice ending at $Sv$.

\end{itemize}
Strictly speaking, we should say ``infinitely many isomorphism classes
of indecomposable objects'' but as mentioned in Notation
\ref{not:lax} we are lax about this.

(i) $\Rightarrow$ (ii').
Let $v \in \ind( \sC )$ be on the base line of the AR quiver.  Note
that $v$ and $Sv$ are in different components of the AR quiver since
there are $d \geq 2$ components and $S$ moves vertices to the ``next''
component; cf.\ Construction \ref{con:coordinates}.
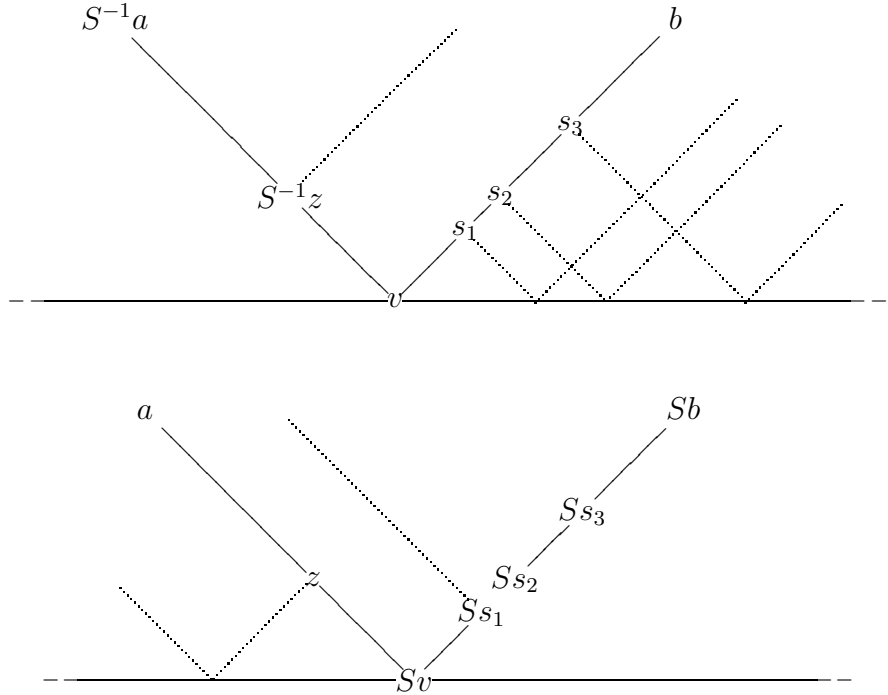
\begin{figure}
\begin{eqnarray*}
&
\vcenter{
  \xymatrix @-3.6pc @! {
    &&&{S^{-1}a}&&&&&&&*{}&&&&&&&&&{b} \\
    &&&&&&&&*{}&&&&&&&&*{}&&*{}&&&&*{}\\
    &&&&&&&*{}&&&&&&&&&&*{}&&&&&*{}&*{}\\
    &&&&&&&&*{} &&&&&*{}& *{} & *{} & *{s_3} \ar@{-}[uuurrr]\ar@{.}[dddddrrrrr] &&&&*{}&&*{} \\
    &&&&&&&& & *{} & & & & & & *{} &&&&*{}&*{}&*{}&&&&*{}\\
    &&&&&&&&*{S^{-1}z} \ar@{-}[uuuuulllll]\ar@{.}[uuuuurrrrr]& & & & *{}& & *{s_2} \ar@{-}[uurr] \ar@{.}[dddrrr]& & && *{}&&&&&&\\
    &&&&&&&& & & & *{} & & *{s_1} \ar@{.}[ddrr] \ar@{-}[ur]& & & &*{}&&&&&&*{}\\ 
    &&&&&&& & & & & & & &&&& \\
    *{}\ar@{--}[r]&*{} \ar@{-}[rrrrrrrr] &&&&&&& & *{}\ar@{-}[rr]& *{} & *{v}  \ar@{-}[uuulll]\ar@{-}[uurr]\ar@{-}[rrrr]& & & & *{} \ar@{.}[uuuuuurrrrrr]\ar@{-}[rrrrrrrrr]&&\ar@{.}[uuuuurrrrr]&*{}&*{}&&*{}\ar@{.}[uuurrr]&&&*{}\ar@{--}[r]&*{}\\
           }
}
& \\
& & \\
& & \\
&
\vcenter{
  \xymatrix @-2.6pc @! {
    &&&{a}&&&&&&&*{}&&&&&&&&&{Sb} \\
    &&&&&&&&*{}&&&&&&&&*{}&&*{}&&&&*{}\\
    &&&&&&&*{}&&&&&&&&&&*{}&&&&&*{}&*{}\\
    &&&&&&&&*{} &&&&&*{}& *{} & *{} & *{Ss_3} \ar@{-}[uuurrr] &&&&*{}&&*{} \\
    &&&&&&&& & *{} & & & & & & *{} &&&&*{}&*{}&*{}&&&*{}\\
    &&&&&&&&*{z} \ar@{.}[dddlll]\ar@{-}[uuuuulllll]& & & & *{}& & *{Ss_2} \ar@{-}[uurr] & & && *{}\\
    &&&&&&&& & & & *{} & & *{Ss_1} \ar@{.}[uuuuuullllll] \ar@{-}[ur]& & & &*{}&&&&&&*{}\\ 
    &&&&&&& & & & & & & &&&& \\
    *{}\ar@{--}[r]&*{} \ar@{-}[rrrrrrrr] &&&&\ar@{.}[uuulll]&&& & *{}\ar@{-}[rr]& *{} & *{Sv}  \ar@{-}[uuulll]\ar@{-}[uurr]\ar@{-}[rrrr]& & & & *{} \ar@{-}[rrrrrrrr]&&&&*{}&*{}&&*{}&*{}\ar@{--}[r]&*{}\\
           }
}
&
\end{eqnarray*}
\caption{Objects in two components of the AR quiver.}
\label{fig:1}
\end{figure}
Figure \ref{fig:1} shows the components of the quiver containing $v$
and $Sv$.  As indicated, $b$ is the slice starting at $v$ and $a$ the
slice ending at $Sv$.  Assume that (i) holds and that $\ind( \sS )
\cap b$ is infinite.  To show (ii'), we must show that $\ind( \sS )
\cap a$ is infinite.

Let $z$ be an indecomposable object on $a$ with
right-$\sS$-approximation $s \stackrel{\sigma}{\rightarrow} z$.  If
$s_1$ is an object on $b$ then as shown by outlines in the figure we
have $z \in F^-(Ss_1)$.  Hence there is a non-zero morphism $s_1
\rightarrow z$ by Proposition \ref{pro:Hom}.  So each of the
infinitely many objects in $\ind( \sS ) \cap b$ has a non-zero
morphism to $z$, and each such morphism factors through $\sigma$
because $\sigma$ is a right-$\sS$-approximation.  Since $\sC$ is a
Krull-Schmidt category, this implies that there is an indecomposable
direct summand $s'$ of $s$ such that the component $s'
\stackrel{\sigma'}{\rightarrow} z$ of $\sigma$ is non-zero and such
that there are infinitely many objects $s_1$, $s_2$, $s_3$, $\ldots$
in $\ind( \sS ) \cap b$ which have non-zero morphisms to $s'$.  Note
that $s' \in \sS$ since $\sS$ is closed under direct summands by
assumption.

We claim that this forces $s'$ to be on $a$, higher up than $z$.
Hence, by moving $z$ upwards we obtain infinitely many objects in
$\ind( \sS ) \cap a$.

To prove the claim, note that since $s'
\stackrel{\sigma'}{\rightarrow} z$ is non-zero, Remark \ref{rmk:Hom}
gives $s' \in F^+( S^{ -1 }z ) \cup F^-( z )$.  The sets $F^+( S^{
  -1 }z )$ and $F^-( z )$ are outlined in Figure \ref{fig:1}.  And $s'
\in F^+(S^{-1}z)$ is impossible because there would not be infinitely
many objects in $\ind( \sS ) \cap b$ with a non-zero morphism to $s'$,
as one sees by considering the sets $F^+(s_i)$ which are also
outlined in the figure.

So we have $s' \in F^-(z)$.  We already know $s' \in F^-(Ss_i)$ for
each $i$.  Hence, as one sees in Figure \ref{fig:1}, we have $s'$ on
$a$.  Finally, since there is a non-zero morphism $s'
\stackrel{\sigma'}{\rightarrow} z$, it follows that $s'$ is higher up
on $a$ than $z$.

(ii') $\Rightarrow$ (i).
Assume that (ii') holds and that $z \in \ind( \sC )$ is given.  We
will show (i) by constructing a right-$\sS$-approximation $s
\stackrel{\sigma}{\rightarrow} z$.  We must ensure that each morphism
$s' \rightarrow z$ with $s' \in \sS$ factors through $\sigma$, and we
will do so by considering the possibilities for $s'$ and building up
$\sigma$ accordingly.

We only need to consider those $s' \in \ind( \sS )$ which have
non-zero morphisms to $z$.  By Remark \ref{rmk:Hom} there are the
cases $s' \in F^-( z )$ and $s' \in F^+( S^{ -1 }z )$, see Figure
\ref{fig:10}.  Note that $z$ and $S^{ -1 }z$ are in different
components of the AR quiver; cf.\ the previous part of the proof.

\begin{figure}
\begin{eqnarray*}
&
\vcenter{
  \xymatrix @-4.6pc @! {
    &&&&&&&&& \\
    &&&&&&&& *{} \ar@{--}[ur] && b \\
    && \\
    &&&&&&&&&&&& \\
    &&&&&&&&&&& *{} \ar@{--}[ur] \\
    &&&& *{S^{-1}z} \ar@{-}[uuuurrrr] \\
    &&&&&&&&&&& *{F^+(S^{-1}z)} \\ 
    && \\
    *{}\ar@{--}[r] & *{} \ar@{-}[rr] & & *{v} \ar@{.}[uuuuuuurrrrrrr] \ar@{-}[rr] && *{}\ar@{-}[rr]& *{} & *{}  \ar@{-}[uuulll]\ar@{-}[uuuurrrr]\ar@{-}[rrrr]& & & & *{} \ar@{-}[rr]&&*{}\ar@{--}[r]&*{}\\
           }
}
& \\
& & \\
& & \\
&
\vcenter{
  \xymatrix @-3.1pc @! {
    &&&&&&&&&&& \\
    &&&&&& *{} \ar@{--}[ul] &&\\
    &&& a &&&& \\
    &&&&&&& \\
    &&& \ar@{--}[ul] &&&& \\
    &&&&&&&&&&*{z} \ar@{-}[dddlll]\ar@{-}[uuuullll]\\
    &&&& *{F^-(z) \;} &&&&&&& \\ 
    &&&&&&&&&  \\
    *{}\ar@{--}[r]&*{} \ar@{-}[rrrrrrrr] &&&&&& \ar@{-}[uuuullll] && *{Sv} \ar@{.}[uuuuuullllll]\ar@{-}[rr] && *{}\ar@{--}[r]&*{}\\
           }
}
&
\end{eqnarray*}
\caption{Another view of objects in two components of the AR quiver.}
\label{fig:10}
\end{figure}
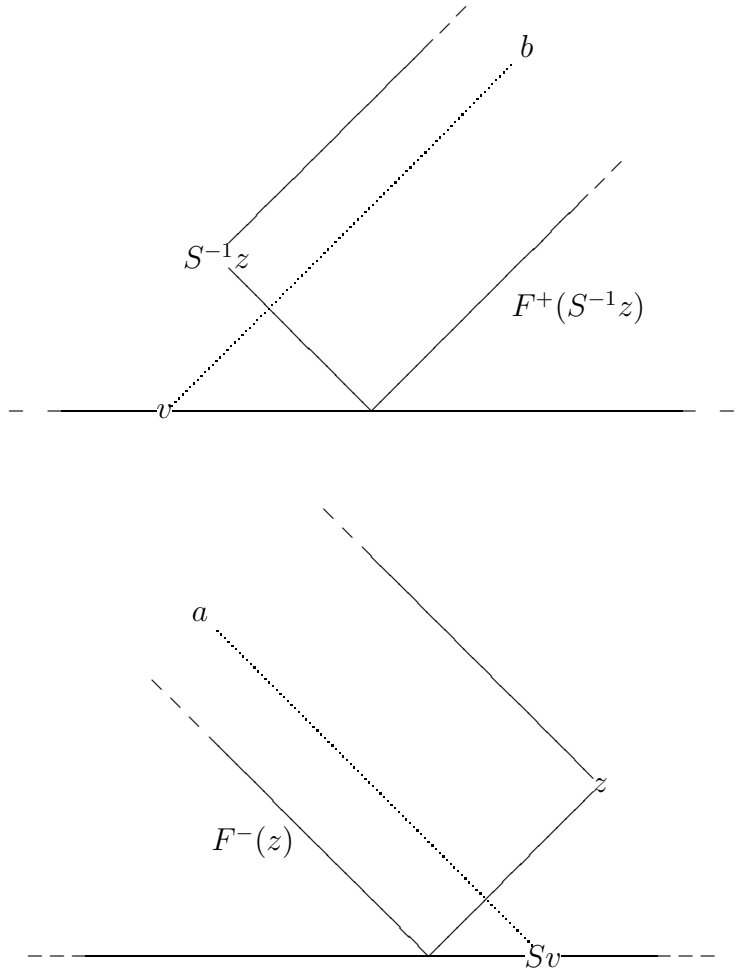

First, assume $s' \in \ind( \sS ) \cap F^-( z )$.  The slice $a$ in
Figure \ref{fig:10} determines a half line $F^-( z ) \cap a$.  If
there are objects of $\sS$ on this half line, then let $s_a$ be the
one which is closest to the base line of the quiver and let $s_a
\stackrel{ \sigma_a }{ \rightarrow } z$ be a non-zero morphism.  If
$s'$ is on $a$ then it is above $s_a$ and Proposition
\ref{pro:forward_factorization}(iv) implies that each morphism $s'
\rightarrow z$ factors through $\sigma_a$.  There are only finitely
many slices $a$ intersecting $F^-( z )$.  Including the corresponding
morphisms $\sigma_a$ as components of $\sigma$ ensures that each
morphism $s' \rightarrow z$ with $s' \in \ind( \sS ) \cap F^-( z )$
factors through $\sigma$.

Secondly, assume $s' \in \ind( \sS ) \cap F^+( S^{ -1 } z )$.  The
slice $b$ in Figure \ref{fig:10} determines a half line $F^+( S^{ -1
}z ) \cap b$, and we split into two cases.

The case where $\sS$ has finitely many objects on $F^+( S^{ -1 }z )
\cap b \;$:  Let $s_b$ be the direct sum of these objects and let each
component of the morphism $s_b \stackrel{ \sigma_b }{ \rightarrow } z$
be non-zero.  If $s'$ is on $b$, then $s'$ is one of the direct
summands of $s_b$ and each morphism $s' \rightarrow z$ factors through
$\sigma_b$ since each non-zero $\Hom$ space in $\sC$ is
$1$-dimensional.

The case where $\sS$ has infinitely many objects on $F^+( S^{ -1 }z )
\cap b \;$: Then $\ind( \sS ) \cap b$ is infinite.  If $b$ is the
slice starting at $v$ and $a$ the slice ending at $Sv$, then condition
(ii') says that $\ind( \sS ) \cap a$ is also infinite.  In particular
it is non-empty so we have already included the non-zero morphism $s_a
\stackrel{ \sigma_a }{ \rightarrow } z$ as a component of $\sigma$ in
the previous part of the proof.  If $s'$ is on $b$ then it is
straightforward to use Proposition \ref{pro:backward_factorization} to
check that each morphism $s' \rightarrow z$ factors through
$\sigma_a$.

As above, there are only finitely many slices $b$ intersecting $F^+(
S^{ -1 }z )$.  Including the relevant morphisms $\sigma_b$ as
components of $\sigma$ ensures that each morphism $s' \rightarrow z$
with $s' \in \ind( \sS ) \cap F^+( S^{ -1 }z )$ factors through
$\sigma$.
\end{proof}

A similar proof establishes the following dual result.

\begin{Proposition}
\label{pro:basic_d_neq_1_dual}
Let $\sS$ be a subcategory of $\sC$ and let $\fS$ be the corresponding
set of $d$-admissible arcs.  The following conditions are equivalent.
\begin{enumerate}

  \item  The subcategory $\sS$ is left-approximating.

\smallskip

  \item  Each left-fountain of $\fS$ is a right-fountain of $\fS$. 

\end{enumerate}
\end{Proposition}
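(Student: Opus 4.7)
The plan is to dualize the proof of Proposition \ref{pro:basic_d_neq_1} throughout, interchanging the roles of ``left'' and ``right'' and invoking Serre duality where needed. First, I would translate condition (ii) into slice language using the characterizations from Proposition \ref{pro:basic_d_neq_1}'s proof: it becomes the statement that for every $v \in \ind(\sC)$ on the base line of the AR quiver, if $\sS$ has infinitely many indecomposables on the slice $a$ ending at $Sv$, then it also has infinitely many on the slice $b$ starting at $v$.

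For (i) $\Rightarrow$ (ii), I would pick $z$ on $b$ and a left-$\sS$-approximation $z \stackrel{\sigma}{\rightarrow} s$. By Proposition \ref{pro:Hom}, the infinitely many indecomposables $s_1, s_2, \ldots$ of $\sS$ on $a$ lie in $F^-(Sz)$ and hence receive non-zero morphisms from $z$; each factors through $\sigma$. Krull-Schmidt then yields an indecomposable summand $s' \in \sS$ of $s$ with non-zero morphisms from $z$ and to infinitely many of the $s_i$, so $s' \in F^+(z) \cup F^-(Sz)$. An analysis of the regions $F^{\pm}(s_i)$, formally dual to that in Proposition \ref{pro:basic_d_neq_1}, rules out $s' \in F^-(Sz)$ and places $s'$ on $b$, below $z$. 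Varying $z$ then yields infinitely many indecomposables of $\sS$ on $b$.

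For (ii) $\Rightarrow$ (i), given $z \in \ind(\sC)$ I would assemble a left-$\sS$-approximation $z \stackrel{\sigma}{\rightarrow} s$ from components indexed by the finitely many slices meeting $F^+(z)$ or $F^-(Sz)$, which are by Proposition \ref{pro:Hom} the regions in which the relevant $s' \in \ind(\sS)$ (those receiving non-zero morphisms from $z$) must lie. On each slice meeting $F^+(z)$ I include a morphism to the $\sS$-indecomposable on that slice closest to the base line; Proposition \ref{pro:forward_factorization}(iv) delivers the required factorization. On each slice meeting $F^-(Sz)$ I split into the finite case (direct-sum construction, exactly as in the original proof) and the infinite case, where condition (ii) guarantees that the paired slice through $F^+(z)$ has already contributed a component $\sigma_a$; a Serre-duality argument analogous to that in Proposition \ref{pro:backward_factorization} then handles the required factorization through $\sigma_a$.

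The main obstacle is verifying that the dualized factorization statement needed in the infinite case genuinely holds: Serre duality $\sC(z,s') \cong D\sC(s',Sz)$ converts the question ``does every $z \rightarrow s'$ factor through $z \rightarrow s_a$?'' into the question ``does the composition of $s_a \rightarrow s'$ with a non-zero $s' \rightarrow Sz$ remain non-zero?'', and one must check that the geometric placement of $s_a$, $s'$, and $Sz$ brings this within the scope of Proposition \ref{pro:forward_factorization}. Once this routine verification is in place, each step of the argument mirrors the corresponding step in the proof of Proposition \ref{pro:basic_d_neq_1}.
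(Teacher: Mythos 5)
Your proposal is essentially the paper's own proof: the paper disposes of this proposition with the single line ``A similar proof establishes the following dual result,'' and your dualization of the proof of Proposition \ref{pro:basic_d_neq_1} --- including the recognition that a mirror image of Proposition \ref{pro:backward_factorization}, proved by the same Serre-duality reduction to Proposition \ref{pro:forward_factorization}(iii), is the one ingredient needing separate verification --- is exactly what that line asks for. One small orientation slip: since $s' \in F^+( z )$ forces the second coordinate of $s'$ to be at least that of $z$, the summand $s'$ lands on the slice $b$ \emph{above} $z$ (farther from the base line), not below it, and it is this direction that lets ``varying $z$'' produce infinitely many objects of $\sS$ on $b$.
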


{\bf Proof of Theorem E. }
Given a subcategory $\sT$ of $\sC$ and the corresponding set of
$d$-admissible arcs $\fT$, Theorem D says that $\sT$ is weakly
$d$-cluster tilting if and only if $\fT$ is a $( d+2 )$-angulation of
the $\infty$-gon.  It is not hard to see that since $\fT$ is a set of
pairwise non-crossing $d$-admissible arcs, it is locally finite or has
a fountain if and only if it satisfies conditions (ii) in Propositions
\ref{pro:basic_d_neq_1} and \ref{pro:basic_d_neq_1_dual}.  By the
propositions, this happens if and only if $\sT$ is left- and
right-approximating.
\hfill $\Box$

\section{Arc combinatorics}
\label{sec:combinatorics}

\begin{Construction}
\label{con:i}
Let $\fT$ be a $( d+2 )$-angulation of the $\infty$-gon and let $p_0
\in \BZ$ be given.  We define integers $p_1, p_2, \ldots$
inductively as follows:  If $p_{ \ell }$ has already been defined,
then 
\begin{itemize}

  \item  if $\fT$ contains no arcs of the form $( p_{ \ell } , q )$,
    then let $p_{ \ell + 1 } = p_{ \ell } + 1$;

\smallskip

  \item if $\fT$ contains a non-zero, finite number of arcs of the form
    $( p_{ \ell } , q )$, then let $( p_{\ell} , p_{\ell+1} )$ be the
    one with maximal length;

\smallskip

  \item  if $\fT$ contains infinitely many arcs of the form $( p_{
      \ell } , q )$, that is, if $p_{ \ell }$ is a right-fountain of
      $\fT$, then stop the algorithm and do not define $p_{ \ell + 1 }$.  

\end{itemize}
If the algorithm stops, then it defines a sequence with finitely many
elements,
\[
  p_0 < \cdots < p_m.
\]
If it does not stop, then it defines a sequence with infinitely many
elements,
\[
  p_0 < p_1 < \cdots, 
\]
and we set $m = \infty$.  Let us sum up the properties of the
sequence.
\begin{enumerate}

  \item If $m < \infty$, then $p_m$ is a right-fountain of $\fT$.

\smallskip

  \item $( p_{\ell} , p_{ \ell+1 } )$ is either a pair of consecutive
    integers or an arc in $\fT$.

\smallskip

  \item $p_{\ell} - p_0 \equiv \ell \pmod d$.

\end{enumerate}
To see (iii), note that the length of a $d$-admissible arc is $\equiv
1 \pmod d$.
\end{Construction}

Collin Bleak proved that a triangulation of the $\infty$-gon has a
left-fountain if and only if it has a right-fountain, and his method
also works for $( d+2 )$-angulations.  We thank him for permitting us
to provide a proof of the following lemma which establishes the ``only
if'' direction.  ``If'' follows by symmetry.  See also \cite[lem.\
4.11]{G}.

\begin{Lemma}
\label{lem:fountains2}
Let $\fT$ be a $( d+2 )$-angulation of the $\infty$-gon.  Suppose that
$p_0$ is a left-fountain of $\fT$ and perform Construction
\ref{con:i}. 
\begin{enumerate}

  \item The construction gives a finite sequence $p_0 < \cdots < p_m$
    with $0 \leq m \leq d$.

\smallskip

  \item  $p_m$ is a right-fountain of $\fT$.

\smallskip

  \item  Let $\ft \in \fT$ and assume $\ft \neq ( p_{\ell} , p_{\ell +
  1 })$ for $\ell \in \{ 0 , \ldots, m-1 \}$.  Then $\ft$ has an
  overarc $\fr \in \fT$.

\end{enumerate}
\end{Lemma}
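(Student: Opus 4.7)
The plan is to establish the three parts in the order (ii), (i), (iii). Part (ii) is essentially immediate: by Construction \ref{con:i}, the algorithm halts at index $\ell$ precisely when $p_\ell$ is a right-fountain, so once (i) guarantees $m < \infty$, part (ii) is automatic.

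For (i), I plan to argue by contradiction. Suppose $p_0 < p_1 < \cdots < p_{d+1}$ are all defined. By property (iii) of Construction \ref{con:i}, $p_{d+1} - p_0 \equiv d+1 \equiv 1 \pmod d$ and $p_{d+1} - p_0 \geq d+1 \geq 2$, so $(p_0, p_{d+1})$ is a $d$-admissible arc. Since $\fT$ is a maximal non-crossing set of $d$-admissible arcs, either $(p_0, p_{d+1}) \in \fT$ or some $\fa = (a, b) \in \fT$ crosses it. The first case contradicts the construction of $p_1$, since $(p_0, p_{d+1})$ would be an arc of $\fT$ starting at $p_0$ with right endpoint exceeding $p_1$. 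In the second case, either $a < p_0 < b < p_{d+1}$ or $p_0 < a < p_{d+1} < b$. For the former, the left-fountain at $p_0$ supplies some $(s, p_0) \in \fT$ with $s < a$, and $\fa$ crosses $(s, p_0)$, contradicting non-crossing. For the latter, I locate the unique $\ell \in \{0, \ldots, d\}$ with $p_\ell \leq a < p_{\ell+1}$: if $a = p_\ell$, then $\fa = (p_\ell, b)$ with $b > p_{d+1} \geq p_{\ell+1}$ contradicts the choice of $p_{\ell+1}$; if $p_\ell < a < p_{\ell+1}$, then $(p_\ell, p_{\ell+1})$ must be an arc of $\fT$ (not a unit segment), and $\fa$ crosses it.

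For (iii), the plan is a case analysis on the position of the endpoints of $\ft = (t, u) \in \fT$ relative to the finite sequence $p_0 < \cdots < p_m$. If $t < p_0$, then $u \leq p_0$ (otherwise $\ft$ crosses any left-fountain arc $(s, p_0)$ with $s < t$), and a longer left-fountain arc with $s < t$ overarcs $\ft$. If $t \geq p_m$, the right-fountain at $p_m$ supplies an arc $(p_m, q) \in \fT$ with $q > u$ that overarcs $\ft$. Otherwise $t$ lies in $[p_0, p_m)$, and I locate the unique $\ell$ with $p_\ell \leq t < p_{\ell+1}$: if $t = p_\ell$, then the choice of $p_{\ell+1}$ combined with the hypothesis $\ft \neq (p_\ell, p_{\ell+1})$ forces $u < p_{\ell+1}$, so $(p_\ell, p_{\ell+1})$ (necessarily an arc of $\fT$) overarcs $\ft$; if $p_\ell < t < p_{\ell+1}$, then $(p_\ell, p_{\ell+1})$ is necessarily an arc of $\fT$, non-crossing with $\ft$ forces $u \leq p_{\ell+1}$, and $(p_\ell, p_{\ell+1})$ overarcs $\ft$.

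The main obstacle I expect is the sub-case analysis in (i) when the crossing arc $\fa = (a, b)$ lies entirely to the right of $p_0$: one must partition the position of $a$ across the entire sequence $p_0 < \cdots < p_{d+1}$ and then combine the algorithm's maximality with the non-crossing property of $\fT$. The arithmetic identity $p_\ell - p_0 \equiv \ell \pmod d$ from Construction \ref{con:i} is what makes the candidate arc $(p_0, p_{d+1})$ $d$-admissible, which is precisely what enables the maximality step to produce a crossing arc to argue with.
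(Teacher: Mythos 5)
Your proposal is correct and follows essentially the same route as the paper: part (i) by contradiction via the $d$-admissible arc $(p_0,p_{d+1})$, which must be crossed by some arc of $\fT$, with the left-fountain at $p_0$ killing one crossing configuration and the maximality in Construction \ref{con:i} killing the other; part (ii) read off from the construction; and part (iii) by the same trichotomy $u\leq p_0$, $p_0\leq t<u\leq p_m$, $p_m\leq t$ with the overarc supplied by a fountain arc or by $(p_\ell,p_{\ell+1})$. The only cosmetic difference is that the paper splits the subcase ``$(p_0,p_{d+1})\in\fT$'' according to whether any arc of $\fT$ starts at $p_0$, whereas you dispose of it in one stroke; both are fine.
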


\begin{proof}
(i) Assume to the contrary that $m \geq d + 1$; this includes the
possibility $m = \infty$.  Construction \ref{con:i}(iii) shows that $(
p_0 , p_{ d+1 } )$ is a $d$-admissible arc.  If $( p_0 , p_1 )$ is a
$d$-admissible arc, then $( p_0 , p_{ d+1 } )$ has strictly greater
length than $( p_0 , p_1)$ and by Construction \ref{con:i} we have $(
p_0 , p_{ d+1 } ) \not\in \fT$.  If $( p_0 , p_1 )$ is not a
$d$-admissible arc, then by Construction \ref{con:i} there are no arcs
in $\fT$ of the form $( p_0 , q )$ so we have $( p_0 , p_{
  d+1 } ) \not\in \fT$ again.  In either case there must be an arc $(
r,s ) \in \fT$ which crosses $( p_0 , p_{ d+1 } )$, that is, $r < p_0
< s < p_{ d+1 }$ or $p_0 < r < p_{ d+1 } < s$.  But $p_0$ is a
left-fountain of $\fT$ so $r < p_0 < s < p_{ d+1 }$ is impossible
since it would imply that $( r,s )$ crossed an arc in $\fT$.

We must therefore have $p_0 < r < p_{ d+1 } < s$.  However, this also
leads to a contradiction: We cannot have $p_{\ell} < r < p_{\ell+1}$
for any $\ell \in \{ 0, \ldots, d \}$, for if we did then $( p_{\ell}
, p_{ \ell+1 })$ would not be consecutive integers whence $( p_{\ell}
, p_{\ell+1} ) \in \fT$ by Construction \ref{con:i}(ii), but this arc
would cross $( r,s ) \in \fT$.  So we must have $r = p_{\ell}$ for an
$\ell \in \{ 1 , \ldots , d \}$.  Hence $\fT$ contains arcs of the
form $( p_{\ell},q )$, and by Construction \ref{con:i} the one with
maximal length is $( p_{\ell} , p_{ \ell+1 })$.  But this contradicts
$( r,s ) \in \fT$ because we know $r = p_{\ell}$ and $p_{ \ell+1 }
\leq p_{ d+1 } < s$.

(ii)  See Construction \ref{con:i}(i).

(iii)  Let us write $\ft = ( t,u )$ and search for $\fr$.

Since $p_0$ and $p_m$ are a left-fountain and a right-fountain of
$\fT$, we must have $t < u \leq p_0$ or $p_0 \leq t < u \leq p_m$ or
$p_m \leq t < u$.

If $t < u \leq p_0$, then we can choose $\fr = ( r,p_0 ) \in \fT$
with $r < t$.  If $p_m \leq t < u$, then we can choose $\fr = ( p_m,s
) \in \fT$ with $u < s$.

Now assume $p_0 \leq t < u \leq p_m$.

If $t = p_{\ell}$ for an $\ell \in \{ 0 , \ldots , m-1 \}$, then $\ft
\in \fT$ is an arc of the form $( p_{ \ell } , q )$.  Among the arcs
in $\fT$ of this form, by Construction \ref{con:i} the one with
maximal length is $( p_{\ell} , p_{\ell+1} )$.  Since $\ft \neq (
p_{\ell} , p_{\ell+1} )$ by assumption, we get that $\fr = ( p_{\ell}
, p_{\ell+1} ) \in \fT$ is an overarc of $\ft$.

If $p_{\ell} < t < p_{\ell+1}$ for an $\ell \in \{ 0 , \ldots , m-1
\}$, then we must have $u \leq p_{\ell+1}$, since otherwise $( t,u )
\in \fT$ and $( p_{\ell} , p_{\ell+1} ) \in \fT$ would cross.  But
then $\fr = ( p_{\ell} , p_{\ell+1} ) \in \fT$ is again an overarc of
$\ft = ( t,u )$.
\end{proof}

\begin{Lemma}
\label{lem:fountains}
Let $\fT$ be a $( d+2 )$-angulation of the $\infty$-gon.  Then $\fT$
is either locally finite or has precisely one left-fountain and one
right-fountain. 
\end{Lemma}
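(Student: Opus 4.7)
The plan is to prove the dichotomy in two parts. First, I would show that if $\fT$ fails to be locally finite then $\fT$ contains at least one left-fountain and at least one right-fountain. If $\fT$ is not locally finite, some integer $t$ has infinitely many arcs of $\fT$ incident to it, so by the pigeonhole principle $t$ is either a left-fountain or a right-fountain. Applying Lemma \ref{lem:fountains2}(ii) (resp.\ its symmetric version noted immediately after that lemma) then produces a right-fountain (resp.\ left-fountain). Hence at least one of each type exists.

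The main step is uniqueness. Suppose for contradiction that $p_0 < q_0$ are both left-fountains of $\fT$. Since $q_0$ is a left-fountain, $\fT$ contains infinitely many arcs $(s', q_0)$ with $s' < q_0$, and only finitely many of these can have $s' \in [p_0, q_0)$, so I can fix one such arc with $s' < p_0$. For any arc $(s, p_0) \in \fT$ (necessarily with $s < p_0$), the two arcs cross precisely when $s < s' < p_0 < q_0$, so non-crossing forces $s \geq s'$. Thus every arc $(s, p_0) \in \fT$ has $s \in \{ s', s'+1, \ldots, p_0 - 1 \}$, a finite set, contradicting the left-fountain property of $p_0$. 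Uniqueness of the right-fountain follows by the symmetric argument.

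Finally, if $\fT$ has no fountains at all, then for every integer $t$ both the sets of arcs $(s,t) \in \fT$ and $(t,u) \in \fT$ are finite, so $\fT$ is locally finite; combining this with the uniqueness step yields the stated dichotomy. The main obstacle, such as it is, is organising the non-crossing case analysis cleanly; once the correct arc $(s', q_0)$ with $s' < p_0$ has been selected using the left-fountain hypothesis on $q_0$, the contradiction is immediate and the rest of the argument is bookkeeping.
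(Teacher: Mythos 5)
Your proposal is correct and follows essentially the same route as the paper: the existence of both fountain types comes from Lemma \ref{lem:fountains2}(ii) and its mirror image, exactly as in the paper's proof. The only difference is that you spell out the uniqueness step (which the paper dismisses as ``easy to see''), and your crossing argument for it is valid.
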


\begin{proof}
If $\fT$ is not locally finite, then it has a left- or a
right-fountain.  By Lemma \ref{lem:fountains2}(ii) and its mirror
image, it has both a left- and a right-fountain.  And it is easy to
see that in any event, it has at most one left- and at most one
right-fountain.
\end{proof}

\begin{Lemma}
\label{lem:overarc}
Let $\fT$ be a $( d+2 )$-angulation of the $\infty$-gon which is
locally finite or has a fountain.  Then each arc $\ft \in \fT$ has an
overarc $\fr \in \fT$.
\end{Lemma}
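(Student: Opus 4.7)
The plan is to split on the dichotomy in the hypothesis. The fountain case is handled by directly invoking Lemma \ref{lem:fountains2}; the locally finite case requires an independent argument based on Construction \ref{con:i} and a maximality argument.

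If $\fT$ has a fountain at some point $p$, then $p$ is simultaneously a left- and a right-fountain of $\fT$. I would run Construction \ref{con:i} with $p_0 = p$; the algorithm halts immediately at $\ell = 0$ because $p_0$ is a right-fountain, yielding $m = 0$. Since $p_0$ is a left-fountain, Lemma \ref{lem:fountains2}(iii) applies to every $\ft \in \fT$: its exceptional set $\{(p_\ell, p_{\ell+1}) : 0 \leq \ell \leq m-1\}$ is empty when $m = 0$, so every arc in $\fT$ has an overarc.

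Now assume $\fT$ is locally finite, and fix $\ft = (t,u) \in \fT$. Run Construction \ref{con:i} with $p_0 = t$; local finiteness (no right-fountains) forces an infinite sequence $t = p_0 < p_1 < p_2 < \cdots$. Since $(t,u) \in \fT$, the construction gives $p_1 \geq u$; if $p_1 > u$, then $(t, p_1) \in \fT$ is already an overarc, so we may assume $p_1 = u$. By Construction \ref{con:i}(iii), $p_{d+1} - t \equiv d+1 \equiv 1 \pmod{d}$, so $\fx = (t, p_{d+1})$ is a $d$-admissible arc. If $\fx \in \fT$, then $p_{d+1} > p_1 = u$ makes $\fx$ an overarc and we are done; otherwise, maximality of $\fT$ produces an arc $\fa = (a,b) \in \fT$ which crosses $\fx$, and the plan is to show that such $\fa$ is necessarily an overarc of $\ft$.

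The crossing $\fa \cap \fx$ splits into Case I ($t < a < p_{d+1} < b$) and Case II ($a < t < b < p_{d+1}$). In Case I, every placement of $a$ should be ruled out: if $a = p_\ell$ for $\ell \in \{1,\ldots,d\}$, then by Construction \ref{con:i} either $(p_\ell, p_{\ell+1}) \in \fT$ is the longest arc from $p_\ell$ (forcing $b \leq p_{\ell+1} \leq p_{d+1}$, contradicting $b > p_{d+1}$) or there are no arcs from $p_\ell$ at all (contradicting $\fa \in \fT$); and if $p_\ell < a < p_{\ell+1}$, then either $(p_\ell,p_{\ell+1})$ is a unit edge (no interior integer, impossible) or an arc in $\fT$ which crosses $\fa$ (since $b > p_{d+1} \geq p_{\ell+1}$), contradicting that $\fT$ is non-crossing. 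In Case II, the same non-crossing constraint forces $b$ to avoid the open intervals $(p_\ell, p_{\ell+1})$, so $b = p_\ell$ for some $\ell \in \{1,\ldots,d\}$; then $a < t$ and $p_\ell \geq p_1 = u$ make $\fa = (a, p_\ell)$ an overarc of $\ft$, as required. The main obstacle is this last case analysis, in particular keeping track of whether each consecutive pair $(p_\ell, p_{\ell+1})$ is a unit edge or an arc of $\fT$ and drawing the correct conclusion in each situation; once the defining property of the $p_\ell$'s is in hand, the verification is elementary interval arithmetic.
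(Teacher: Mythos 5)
Your proposal is correct and follows essentially the same route as the paper: the fountain case is dispatched by noting $m=0$ in Lemma \ref{lem:fountains2} so that part (iii) applies vacuously to every arc, and the locally finite case runs Construction \ref{con:i} from $t$, reduces to $(t,u)$ being the longest arc at $t$ (i.e.\ $p_1=u$), observes $(t,p_{d+1})\notin\fT$, and analyses the two crossing configurations exactly as in the paper (your Case I is the paper's second case, discarded by the argument from the proof of Lemma \ref{lem:fountains2}; your Case II is the paper's first case, yielding the overarc). The only cosmetic difference is that you pin $b$ down to some $p_{\ell}$ where the paper only needs $b\geq p_1$, which is a harmless strengthening.
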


\begin{proof}
The case where $\fT$ has a fountain at $p_0 \;$: Then we must have $m =
0$ in Lemma \ref{lem:fountains2}, and Lemma \ref{lem:fountains2}(iii)
implies the present result.

The case where $\fT$ is locally finite: Let us write $\ft = ( t,u )$
and search for $\fr$.  We can assume that, among the arcs in $\fT$ of
the form $( t,v )$, the one of maximal length is $( t,u )$, since
otherwise there is obviously an overarc.  Let $p_0 = t$ and perform
Construction \ref{con:i}; then $ ( p_0 , p_1 ) = ( t,u )$.  Since
$\fT$ is locally finite, it has no right-fountain, so Construction
\ref{con:i}(i) implies $m = \infty$.  Construction \ref{con:i}(iii)
implies that $( p_0 , p_{ d+1 } )$ is a $d$-admissible arc.  It has
strictly greater length than $( p_0 , p_1)$, so $( p_0 , p_{ d+1 } )
\not\in \fT$ follows.

There must hence be an arc $( r,s ) \in \fT$ which crosses $( p_0 ,
p_{ d+1 } )$, that is, $r < p_0 < s < p_{ d+1 }$ or $p_0 < r < p_{ d+1
} < s$.

First, assume $r < p_0 < s < p_{ d+1 }$.  Note that we cannot have
$p_0 < s < p_1$ since then $( r,s ) \in \fT$ and $( p_0 , p_1 ) \in
\fT$ would cross.  So $p_1 \leq s$ whence $\fr = ( r,s ) \in \fT$ is
an overarc of $( p_0 , p_1 ) = ( t,u )$.

Secondly, assume $p_0 < r < p_{ d+1 } < s$.  This leads to a
contradiction in the same way as in the second paragraph of the proof
of Lemma \ref{lem:fountains2}.
\end{proof}

\begin{Construction}
\label{con:polygon}
Let $\fT$ be a $( d+2 )$-angulation of the $\infty$-gon and let $\fr =
( r,s ) \in \fT$.  We can view $\{ r , \ldots , s \}$ as the vertices
of an $( s-r+1 )$-gon $R$.  Each pair $( r , r+1 ), ( r+1,r+2 ),
\ldots, ( s-1,s )$ is viewed as an edge of $R$, and so is the arc $(
r,s )$; that is, $r$ and $s$ are viewed as consecutive vertices of
$R$.  Each $d$-admissible arc $\ft$ of which $\fr = ( r,s )$ is an
overarc is viewed as a $d$-admissible diagonal of $R$.  See Figure
\ref{fig:polygon}.
\begin{figure}
\begin{eqnarray*}
  \parbox[t]{3cm}{
    \vspace{-17ex}
$
  \xymatrix @-3.0pc @R=-4ex @! {
     & *{\rule{0ex}{5.5ex}} \ar@{-}[r]
     & *{\rule{0.1ex}{0.8ex}} \ar@{-}[r] \ar@/^1.5pc/@{-}[rrrrr]
     & *{\rule{0.1ex}{0.8ex}} \ar@{-}[r] 
     & *{\rule{0.1ex}{0.8ex}} \ar@{-}[r] \ar@/^1.0pc/@{-}[rrr]
     & *{\rule{0.1ex}{0.8ex}} \ar@{-}[r] 
     & *{\rule{0.1ex}{0.8ex}} \ar@{-}[r] 
     & *{\rule{0.1ex}{0.8ex}} \ar@{-}[r] \ar@/^1.0pc/@{-}^{\textstyle\ft}[rrr]
     & *{\rule{0.1ex}{0.8ex}} \ar@{-}[r] 
     & *{\rule{0.1ex}{0.8ex}} \ar@{-}[r] 
     & *{\rule{0.1ex}{0.8ex}} \ar@{-}[r] 
     & *{\rule{0.1ex}{0.8ex}} \ar@{-}[r] \ar@/^-3.0pc/@{-}_{\textstyle \fr}[lllllllll]
     & *{} \\
     & & r & & & & & & & & & s \\
                               }
$
                 }
& \;\;\;\;\;\;\;\;\;\;\;\;\;\;\;\;\;\;\;\;\;\;\;\;\;\;\;\;\;\;\;\;\;\;\;\;\;\; &
  \begin{tikzpicture}[auto]
    \node[name=s, shape=regular polygon, regular polygon sides=10, minimum size=4cm, draw] {}; 
    \draw[shift=(s.corner 2)] node[above] {$r$};
    \draw[shift=(s.corner 1)] node[above] {$s$};
    \draw[shift=(s.side 1)] node[above] {$\fr$};
    \draw[thick] (s.corner 2) to (s.corner 7);
    \draw[thick] (s.corner 4) to (s.corner 7);
    \draw[thick] (s.corner 7) to node[left=0pt] {$\ft$}(s.corner 10);
  \end{tikzpicture} 
\end{eqnarray*}
\caption{If $\fr = ( r,s )$ is a $d$-admissible arc then $r$, $r+1$,
  $\ldots$, $s$ can be viewed as the vertices of a polygon $R$.  If
  $\ft = ( t,u )$ has $\fr$ as an overarc, then $\ft$ can be viewed as
  a $d$-admissible diagonal of $R$.}
\label{fig:polygon}
\end{figure}
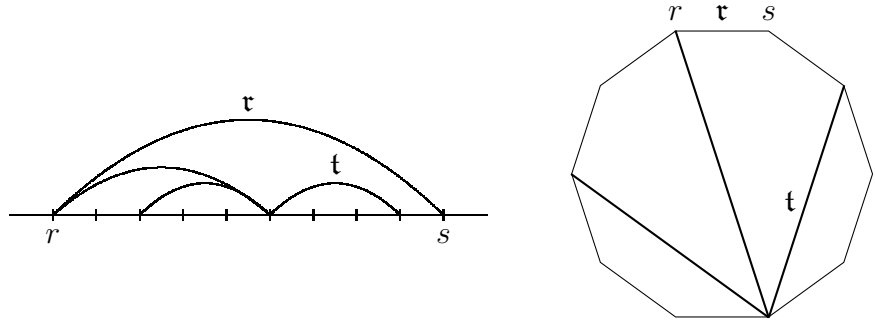
In particular, the set
\[
  \fR = \{\, \ft \in \fT \,|\, \mbox{ $\fr$ is an overarc of $\ft$ } \,\}
\]
is a $( d+2 )$-angulation of $R$.

Observe that $\fR$ divides $R$ into $( d+2 )$-gons, and that one of
these $( d+2 )$-gons, say $T$, has $r$ and $s$ among its vertices.  We
can write the whole set of vertices of $T$ as
\[
  r < t_1 < \cdots < t_d < s,
\]
and hence each of
\[
  ( r,t_1 ) \;,\; ( t_1,t_2 )
  \;,\; \ldots \;,\;
  ( t_{ d-1 },t_d ) \;,\; ( t_d,s )
\]
is either a pair of consecutive integers or a diagonal in $\fR$, that
is, an arc in $\fT$.
\end{Construction}

\begin{Lemma}
\label{lem:mutation}
Let $\fT$ be a $( d+2 )$-angulation of the $\infty$-gon and let $\ft
\in \fT$.
\begin{enumerate}

  \item If $\fU$ is a set of $d$-admissible arcs not in $\fT \setminus
  \ft$ such that $( \fT \setminus \ft ) \cup \fU$ is a $( d+2
  )$-angulation of the $\infty$-gon, then $\fU = \{ \ft^* \}$ for a
  single $d$-admissible arc $\ft^*$.

\smallskip

  \item  If $\ft$ has an overarc in $\fT$ then there are $d + 1$
  choices of $\ft^*$.

\smallskip

  \item  If $\ft$ has no overarc in $\fT$ then there are $\leq d$
  choices of $\ft^*$.

\end{enumerate}
\end{Lemma}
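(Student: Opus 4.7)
My plan is to enumerate the possible $\ft^*$ directly in the two cases of whether $\ft$ has an overarc in $\fT$. A preliminary observation gives part of (i): if $\ft \in \fU$, then maximality of $\fT$ forces $\fU = \{\ft\}$; so we may assume $\ft \notin \fU$, in which case each $\ft^* \in \fU$ lies outside $\fT$ and, by maximality of $\fT$, must cross some arc of $\fT$. That arc cannot lie in $\fT \setminus \ft$, so it is $\ft$ itself. Thus every non-trivial candidate $\ft^*$ crosses $\ft$, and (i) reduces in each case to showing that any two such candidates cross one another.

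For the overarc case (giving (ii) and (i) here), pick any overarc $\fr$ of $\ft$ and apply Construction~\ref{con:polygon}. Inside the finite polygon $R$ bounded by $\fr$, the arcs of $\fT$ having $\fr$ as overarc form a $(d+2)$-angulation containing $\ft$ as a diagonal, and the two $(d+2)$-gons adjacent to $\ft$ merge upon deletion of $\ft$ into a $(2d+2)$-gon $P$. A $(d+2)$-angulation of $P$ is a single diagonal joining two vertices at cyclic distance $d+1$, and there are exactly $d+1$ such ``diameters''. Each edge of $P$ has length $\equiv 1 \pmod{d}$ (being either a pair of consecutive integers or a $d$-admissible arc), so by additivity each diameter is $d$-admissible. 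Any two diameters interleave in $P$ and hence cross, so $|\fU| = 1$; and since $\ft$ is itself one of the $d+1$ diameters, this yields (i) and the count $d+1$ for (ii).

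For the no-overarc case (giving (iii) and (i) here), Lemma~\ref{lem:overarc} forces $\fT$ to be neither locally finite nor to have a fountain, so by Lemma~\ref{lem:fountains} it has distinct left- and right-fountains $p_0 < p_m$. Construction~\ref{con:i} at $p_0$ produces $p_0 < \cdots < p_m$ with $1 \leq m \leq d$ by Lemma~\ref{lem:fountains2}(i), and Lemma~\ref{lem:fountains2}(iii) forces $\ft = (p_\ell, p_{\ell+1})$ for some $\ell \in \{0, \ldots, m-1\}$. Let $p_\ell = t_0 < t_1 < \cdots < t_d = p_{\ell+1}$ be the vertices of the $(d+2)$-gon inside $\ft$. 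A non-trivial $\ft^*$ crosses $\ft$, so exactly one of its endpoints lies strictly in $(p_\ell, p_{\ell+1})$, and to avoid arcs inside $\ft$ this inside endpoint must equal some $t_i$ with $i \in \{1, \ldots, d\}$. The other endpoint cannot lie strictly in any interval $(p_k, p_{k+1})$ without crossing an arc of the chain $\ldots, (p_{\ell+2}, p_{\ell+3}), (p_{\ell+1}, p_{\ell+2})$ (and symmetrically on the left), nor beyond $p_m$ without being crossed by the right-fountain (symmetrically below $p_0$), so it must equal some $p_k$ with $k \in \{0, \ldots, m\} \setminus \{\ell, \ell+1\}$.

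The decisive step is a congruence count. Each edge of the inside $(d+2)$-gon has length $\equiv 1 \pmod{d}$, giving $t_i \equiv p_\ell + i \pmod{d}$; by Construction~\ref{con:i}(iii), $p_k \equiv p_0 + k \pmod{d}$. Imposing $d$-admissibility on $\ft^*$ then forces $i \equiv k - \ell - 1 \pmod{d}$ for right extensions ($k \geq \ell + 2$) and $i \equiv k - \ell + 1 \pmod{d}$ for left extensions ($k \leq \ell - 1$), and in each case $i \in \{1, \ldots, d\}$ is uniquely determined by $k$. This gives $m - 1$ non-trivial candidates; together with $\ft^* = \ft$, there are at most $m \leq d$ choices, proving (iii). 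Two right (resp.\ two left) candidates cross because under $i = k - \ell - 1$ (resp.\ $i = k - \ell + 1 + d$) both the inside and the outside endpoints strictly increase with $k$, so their endpoint orders interleave. The mixed case is the main obstacle: the left $i$-values lie in $\{d - \ell + 1, \ldots, d\}$, the right $i$-values lie in $\{1, \ldots, m - \ell - 1\}$, and the key inequality $m \leq d$ from Lemma~\ref{lem:fountains2}(i) forces the former to strictly exceed the latter, so any left-right pair has interleaving inside endpoints and hence crosses. This completes (i) in the no-overarc case.
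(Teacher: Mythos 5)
Your proof is correct and follows essentially the same route as the paper's: you reduce the overarc case via Construction~\ref{con:polygon} to the well-known finite-polygon statement, and in the no-overarc case you invoke Lemmas~\ref{lem:overarc}, \ref{lem:fountains} and \ref{lem:fountains2} together with Construction~\ref{con:i}, your congruence count and crossing analysis being a reformulation of the paper's observation that both endpoints of $\ft^*$ must lie on the sequence \eqref{equ:pq} exactly $d+1$ steps apart. The only blemish is the indexing slip $t_d = p_{\ell+1}$, which should read $t_{d+1} = p_{\ell+1}$ so that the $(d+2)$-gon has $d$ interior vertices $t_1, \ldots, t_d$, as the remainder of your argument correctly assumes.
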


\begin{proof}
Suppose that $\ft$ has the overarc $\fr \in \fT$.  We will establish
(i) and (ii) for $\ft$.

The set of all arcs in $\fT$ of which $\fr \in \fT$ is an overarc can
be viewed as a $( d+2 )$-angulation $\fR$ of a polygon $R$ by
Construction \ref{con:polygon}.  When $( \fT \setminus \ft ) \cup \fU$
is a $( d+2 )$-angulation of the $\infty$-gon, $\fr$ is an overarc of
each arc in $\fU$ since removing $\ft$ does not create any room above
its overarc $\fr$.  It follows that $\fU$ can be viewed as a set of
$d$-admissible diagonals of $R$ such that $( \fR \setminus \ft)
\cup \fU$ is a $( d+2 )$-angulation of $R$.  Then it is well known
that, as desired, $\fU$ has one element which can be chosen in $d+1$
different ways.

Now suppose that $\ft$ has no overarc in $\fT$.  We will establish (i)
and (iii) for $\ft$.

Lemma \ref{lem:overarc} shows that $\fT$ is not locally finite and
does not have a fountain.  Lemma \ref{lem:fountains} shows that $\fT$
has a left-fountain $p_0$ which is not a right-fountain.  We can
perform Construction \ref{con:i}.  By Lemma \ref{lem:fountains2}(i+ii)
this gives a sequence $p_0 < \cdots < p_m$ with $m \leq d$ where $p_m$
is a right-fountain of $\fT$.  Note that $1 \leq m$ since $p_0$ is not
a right-fountain.  By Construction \ref{con:i}(ii), each $( p_{ \ell }
, p_{ \ell+1 } )$ is either a pair of consecutive integers or an arc
in $\fT$, and it follows from Lemma \ref{lem:fountains2}(iii) that
$\ft = ( p_{j} , p_{j+1} )$ for a $j \in \{ 0 , \ldots , m-1 \}$.

By Construction \ref{con:polygon} applied to $\ft = ( p_{ j } , p_{
j+1 } )$, there is a sequence of integers $p_{j} < q_1 < \cdots
< q_d < p_{ j+1 }$ such that each of $( p_{j} , q_1 )$, $( q_1 ,
q_2 )$, $\ldots$, $( q_{ d-1 } , q_d )$, $( q_d , p_{ j+1 } )$ is
either a pair of consecutive integers or an arc in $\fT$.  We hence
have a sequence of integers
\begin{equation}
\label{equ:pq}
  p_0 < p_1 < \cdots < p_{j}
  < q_1 < \cdots < q_d
  < p_{ j+1 } < \cdots < p_m
\end{equation}
where each pair of neighbouring elements is either a pair of
consecutive integers or an arc in $\fT \setminus \ft$.  In particular,
each pair of neighbouring elements has a difference which is $\equiv 1
\pmod d$.

Now consider a $d$-admissible arc $\ft^* = ( v,w ) \not\in \fT
\setminus \ft$ which crosses no arc in $\fT \setminus \ft$.

We cannot have $w \leq p_0$.  For if we did, then $\ft^* = ( v,w )$
would not cross $\ft = ( p_{j} , p_{ j+1 } )$, and hence $\ft^*$ would
cross no arc in $\fT$ whence $\ft^* \in \fT$.  Since $\ft^* \not\in
\fT \setminus \ft$, this would force $\ft^* = \ft$, but this
contradicts $v < w \leq p_0 \leq p_{ j }$.  We also cannot have $v <
p_0 < w$ because $p_0$ is a left-fountain of $\fT$.  Similarly, we
cannot have $p_m \leq v$ or $v < p_m < w$.

We conclude that $p_0 \leq v < w \leq p_m$.  We claim that, in fact,
$v$ and $w$ must be among the elements of the sequence \eqref{equ:pq}.

Namely, assume that at least one of $v$ and $w$ is not an element of
the sequence.  Then it is strictly between two such elements.  For the
sake of argument, say $q_{ \ell } < v < q_{ \ell+1 }$.  Then we cannot
have $q_{ \ell+1 } < w$, for then $\ft^* = ( v,w )$ and $( q_{ \ell }
, q_{ \ell+1 } ) \in \fT \setminus \ft$ would cross.  So we must have
$v < w \leq q_{ \ell+1 }$.  Hence $( q_{ \ell } , q_{ \ell+1 } )$
is an overarc of $\ft^* = ( v,w )$.

However, the set of all arcs in $\fT$ of which $( q_{ \ell } , q_{
  \ell+1 } ) \in \fT \setminus \ft$ is an overarc can be viewed as a
$( d+2 )$-angulation $\fR'$ of a polygon $R'$ by Construction
\ref{con:polygon}, and $\ft^*$ can be viewed as a $d$-admissible
diagonal of this polygon.  Note that $( q_{ \ell } , q_{ \ell+1 } )$
is not an overarc of $\ft = ( p_j , p_{ j+1 } )$ and so $\fR'
\subseteq \fT \setminus \ft$.  Hence the assumption that $\ft^*$
crosses none of the arcs in $\fT \setminus \ft$ means that it crosses
none of the diagonals in $\fR'$.  But then $\ft^* \in \fR'$ whence
$\ft^* \in \fT \setminus \ft$ which is a contradiction.

So we have $\ft^* = ( v,w )$ with $v$, $w$ elements in the sequence
\eqref{equ:pq}.  However, we saw that each pair of neighbouring
elements in this sequence has a difference which is $\equiv 1 \pmod
d$.  Hence, for $\ft^*$ to be a $d$-admissible arc, $v$ and $w$ must
either be neighbours in the sequence, or $nd+1$ steps apart for an
integer $n \geq 1$.  But they cannot be neighbours for then we would
have $\ft^* \in \fT \setminus \ft$, so $v$ and $w$ must be $nd+1$
steps apart in the sequence.

Since $m \leq d$ by Lemma \ref{lem:fountains2}(i), the sequence
\eqref{equ:pq} has $m + 1 + d \leq 2d + 1$ elements.  It follows that
$n = 1$ and hence two different choices of $\ft^*$ must cross each
other; this shows part (i) of the present lemma.  It also follows that
there are at most $(2d + 1) - (d+1) = d$ different choices for
$\ft^*$, showing part (iii) of the present lemma.
\end{proof}

\section{Proofs of Theorems A, B, and C}
\label{sec:proofs}

\begin{Remark}
Let $\sT$ be a weakly $d$-cluster tilting subcategory of the
triangulated category $\sC$ and let $\fT$ be the corresponding $( d+2
)$-angulation of the $\infty$-gon.

Lemma \ref{lem:mutation}(i) says that if we drop one arc from $\fT$,
then we must add precisely one other $d$-admissible arc to get a new
$( d+2 )$-angulation.

So if we drop one indecomposable object from $\sT$, then we must add
precisely one other indecomposable object to get a new weakly
$d$-cluster tilting subcategory of $\sC$.

That is, ``mutating $\sT$ at $t$'' has the expected effect of
replacing $t$ by a single other indecomposable object.
\end{Remark}

{\bf Proof of Theorem A. }
By Theorem E which was already proved in Section
\ref{sec:approximations}, a $d$-cluster tilting subcategory $\sT$ of
$\sC$ corresponds to a $( d+2 )$-angulation of the $\infty$-gon $\fT$
which is locally finite or has a fountain.

By Lemma \ref{lem:overarc}, each $\ft \in \fT$ has an overarc $\fr
\in \fT$.

By Lemma \ref{lem:mutation}(ii), this means that there are $d + 1$
different choices of a $d$-admissible arc $\ft^*$ such that $( \fT
\setminus \ft ) \cup \ft^*$ is a $( d+2 )$-angulation of the
$\infty$-gon.

Excluding the trivial choice $\ft^* = \ft$ leaves $d$ choices for
$\ft^*$ and translating back to $\sT$ shows Theorem A.
\hfill $\Box$

{\bf Proof of Theorem B. }
Let $\ell \in \{ 0, \ldots, d-1 \}$ be given.  Figure \ref{fig:bad}
shows part of a $( d+2 )$-angulation $\fT_{ \ell }$ of the
$\infty$-gon. 
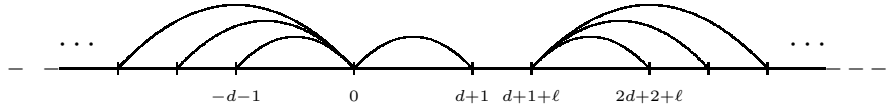
\begin{figure}
\[
\vcenter{
  \xymatrix @-3.5pc @R=-6ex @! {
       \rule{0ex}{5.5ex} \ar@{--}[r]
     & *{\rule{0ex}{4ex}^{\lefteqn{\textstyle \cdots}}} \ar@{-}[r]
     & *{\rule{0.1ex}{0.8ex}} \ar@{-}[r] \ar@/^2.0pc/@{-}[rrrr]
     & *{\rule{0.1ex}{0.8ex}} \ar@{-}[r] \ar@/^1.5pc/@{-}[rrr]
     & *{\rule{0.1ex}{0.8ex}} \ar@{-}[r] \ar@/^1.0pc/@{-}[rr]
     & *{} \ar@{-}[r] 
     & *{\rule{0.1ex}{0.8ex}} \ar@{-}[r] \ar@/^1.0pc/@{-}[rr]
     & *{} \ar@{-}[r] 
     & *{\rule{0.1ex}{0.8ex}} \ar@{-}[r] 
     & *{\rule{0.1ex}{0.8ex}} \ar@{-}[r] 
     & *{} \ar@{-}[r]
     & *{\rule{0.1ex}{0.8ex}} \ar@{-}[r] \ar@/^-1.0pc/@{-}[ll]
     & *{\rule{0.1ex}{0.8ex}} \ar@{-}[r] \ar@/^-1.5pc/@{-}[lll]
     & *{\rule{0.1ex}{0.8ex}} \ar@{-}[r] \ar@/^-2.0pc/@{-}[llll]
     & *{\rule{0ex}{4ex}^{\lefteqn{\textstyle \!\!\!\!\!\!\! \cdots}}} \ar@{--}[r]
     & *{} \\
     & & & & {\scriptscriptstyle -d-1} & & {\scriptscriptstyle 0} & & {\scriptscriptstyle d+1} & {\scriptscriptstyle d+1+\ell} & & {\scriptscriptstyle 2d+2+\ell} 
                    }
}
\]
\caption{A $( d+2 )$-angulation $\fT_{ \ell }$ of the $\infty$-gon
where the arc $\ft = ( 0,d+1 )$ can be replaced in $\ell$ ways.}
\label{fig:bad}
\end{figure}
It contains the arc $\ft = ( 0 , d+1 )$ and has a left-fountain at $0$
and a right-fountain at $d + 1 + \ell$.

There are $\ell + 1$ different choices of a $d$-admissible arc $\ft^*$
such that $( \fT \setminus \ft ) \cup \ft^*$ is a $( d+2 )$-angulation
of the $\infty$-gon; namely, $\ft^* = ( p , p + d + 1 )$ for $p \in \{
0, \ldots, \ell \}$.

Excluding the trivial choice $\ft^* = \ft$ leaves $\ell$ choices.  By
Theorem D which was already proved in Section
\ref{sec:approximations}, the $( d+2 )$-angulation $\fT_{ \ell }$
therefore corresponds to a weakly $d$-cluster tilting subcategory
$\sT_{ \ell }$ with the property claimed in Theorem B.  \hfill $\Box$

{\bf Proof of Theorem C. }
Similar to the proof of Theorem A, but $\ft \in \fT$ may or may not
have an overarc, so both parts (ii) and (iii) of Lemma
\ref{lem:mutation} are needed.
\hfill $\Box$

\medskip
\noindent
{\bf Acknowledgement.}
We are grateful
to Collin Bleak for permitting us to provide a proof that a $( d+2
)$-angulation of the $\infty$-gon has a left-fountain if and only if
it has a right-fountain, see Lemmas \ref{lem:fountains2} and
\ref{lem:fountains}.  Collin Bleak originally proved this for $d =
1$. 

This work was supported by grant number HO 1880/4-1 under the research
pri\-o\-ri\-ty programme SPP 1388 {\em
  Dar\-stel\-lungs\-the\-o\-ri\-e} of the Deut\-sche
For\-schungs\-ge\-mein\-schaft (DFG).

\end{document}